\documentclass[12pt]{amsart}

\headheight=6.15pt \textheight=8.75in \textwidth=6.5in
\oddsidemargin=0in \evensidemargin=0in \topmargin=0in

\newcommand{\rr}{r}

\newcommand{\R}{{\mathbb R}}

\newcommand{\Z}{{\mathbb Z}}

\newcommand{\half}{{\frac{1}{2}}}

\renewcommand{\phi}{\varphi}

\newcommand{\dcal}{\mathcal{D}}

\newcommand{\lcal}{\mathcal{L}}

\newtheorem{maintheo}{{\sc Theorem}}

\newtheorem{maincor}{{\sc Corollary}}

\newtheorem{theo}{{\sc Theorem}}
\newtheorem{cor}[theo]{{\sc Corollary}}
\newtheorem{lem}[theo]{{\sc Lemma}}
\newtheorem{prop}[theo]{{\sc Proposition}}

\newenvironment{rem}{\medskip\noindent{\it Remark:\/} }{\medskip}

\addtolength{\baselineskip}{1pt}

\title[$C^{\infty} $ spectral rigidity of the ellipse]
{$C^{\infty} $ spectral Rigidity of the ellipse}

\author{Hamid Hezari }
\address{Department of Mathematics, MIT, Cambridge, MA 02139, USA} \email{hezari@math.mit.edu}
\author{Steve Zelditch}
\address{Department of Mathematics, Northwestern  University,
Evanston, IL 60208-2370, USA} \email{
zelditch@math.northwestern.edu}

\thanks{The first author is partially supported by the NSF
grant DMS-0969745 and the second author is partially supported by the NSF grant DMS-0904252}

\date{\today}

\begin{document}

\begin{abstract} We prove that ellipses are infinitesimally
spectrally rigid among $C^{\infty}$ domains with the symmetries of
the ellipse.
\end{abstract}

\maketitle

An isospectral deformation of a plane domain $\Omega_0$  is a
one-parameter family $\Omega_{\epsilon}$ of plane domains for
which the spectrum of the Euclidean Dirichlet (or Neumann)
Laplacian $\Delta_{\epsilon}$ is constant (including
multiplicities). We say that $\Omega_{\epsilon}$ is a $C^1$ curve
of $C^{\infty}$ plane domains if there exists a $C^1$ curve  of
diffeomorphisms $\phi_{\epsilon}$ of a neighborhood of $\Omega_0
\subset \R^2$  with $\phi_0 = id$ and with $ \Omega_{\epsilon} =
\phi_{\epsilon}(\Omega_0)$. The infinitesimal generator  $X =
\frac{d}{d \epsilon} \phi_{\epsilon}$ is a vector field in a
neighborhood of $\Omega_0$ which  restricts to a vector field
along  $\partial \Omega_0$;  we denote by $X_{\nu} = \dot{\rho} \nu$
its normal component. With no essential loss of generality we may
assume that $\phi_{\epsilon} |_{\partial \Omega_0}$ is a map of the
form \begin{equation} \label{rhodef}  x \in
\partial \Omega_0 \to x + \rho_{\epsilon} (x) \nu_x, \end{equation} where  $\rho_{\epsilon} \in
C^1([0, \epsilon_0], C^{\infty}(\partial \Omega_0))$, and we put
$\dot{\rho}(x) =\delta \rho\,(x): = \frac{d}{d\epsilon}{|_{\epsilon=0}}
\rho_{\epsilon} (x)$.
  An isospectral deformation is said to be trivial if
$\Omega_{\epsilon} \simeq \Omega_0$ (up to isometry) for
sufficiently small $\epsilon$. A domain $\Omega_0$ is said to be
spectrally rigid if all  isospectral deformations
$\Omega_{\epsilon}$ are trivial. The variation is called
infinitesimally spectrally rigid if $\dot{\rho} = 0$ for all
isospectral deformations.

In this article, we use the Hadamard variational formula of the
wave trace (apparently for the first time) to study spectral
rigidity problems (Theorem \ref{VARWTintro}).  Our main
application  is the infinitesimal spectral rigidity of ellipses
among $C^1$ curves of $C^{\infty}$ plane domains with the
symmetries of an ellipse. We orient the domains so that the
symmetry axes are the $x$-$y$ axes. The symmetry assumption is
then that each $\phi_{\epsilon}$ is invariant under $(x, y) \to
(\pm x, \pm y)$.

\begin{maintheo} \label{RIGID} Suppose that $\Omega_0$ is an ellipse, and that
$\Omega_{\epsilon}$ is a $C^1$ Dirichlet (or Neumann) isospectral
deformation of $\Omega_0$ through $C^{\infty}$ domains with  $\Z_2
\times \Z_2$ symmetry.  Then $X_{\nu} = 0 $ or equivalently
$\dot{\rho} = 0$.

\end{maintheo}

 As discussed in \S \ref{FLATINTRO} and \S \ref{FLAT}, Theorem \ref{RIGID}
implies that ellipses admit no isospectral deformations for which
the Taylor expansion of $\rho_{\epsilon} $ at ${\epsilon} = 0$ is
non-trivial. A function such as $e^{- \frac{1}{{\epsilon^2}}}$ for
which the Taylor series at $\epsilon = 0$ vanishes is called `flat' at
${\epsilon}= 0$.

\begin{maincor} \label{RIGIDCOR} Suppose that $\Omega_0$ is an ellipse, and that
$\epsilon \to \Omega_{\epsilon}$ is a $C^\infty$   Dirichlet (or
 Neumann) isospectral deformation through $\Z_2 \times \Z_2$
symmetric $C^{\infty}$ domains. Then $\rho_{\epsilon}$ must be
flat at $\epsilon = 0$. In particular,  there exist no non-trivial
real analytic curves $\epsilon \to \Omega_{\epsilon}$ of  $\Z_2 \times \Z_2$ symmetric
$C^{\infty}$ domains with the spectrum of an ellipse.
\end{maincor}

Spectral rigidity of the ellipse has been expected for a long time
and is a kind of model problem in inverse spectral theory.
Ellipses are  special  since their billiard flows and maps are
completely integrable. It was conjectured by G. D. Birkhoff that
the ellipse is the only convex smooth plane domain with completely
integrable billiards.  We cannot assume that  the deformed domains
$\Omega_{\epsilon}$  have this property, although the results of
\cite{Sib2,Z2} come close to showing that they do. The results are
somewhat analogous to the spectral rigidity of  flat tori or the
sphere in the Riemannian setting.

The main novel step in the proof is the Hadamard variational
formula for the wave trace (Theorem  \ref{VARWTintro}), which
holds for all smooth Euclidean domains $\Omega \subset \R^n$
satisfying standard `cleanliness' assumptions. It is of
independent interest and has applications to spectral rigidity
beyond the setting of ellipses. We therefore  present the proof in
detail.

The main advance over prior results is that the domains
$\Omega_{\epsilon}$ are allowed to be $C^{\infty}$ rather than
real analytic. Much less than $C^{\infty}$ could be assumed for
the domains $\Omega_{\epsilon}$, but we do not belabor the point.
For real analytic domains a length spectral rigidity result for
analytic domains with the symmetries of the ellipse was proved in
\cite{CdV}.  The method does not apply directly to
$\Delta$-isospectral deformations of ellipses since the length
spectrum of the ellipse may have multiplicities and the full
length spectrum might not be a $\Delta$-isospectral invariant. If
it were, then Siburg's results would imply that the marked length
spectrum is preserved \cite{Sib,Sib2,Sib3}. In \cite{Z,Z2} it is
shown that analytic domains with one symmetry are spectrally
determined if the length of the minimal bouncing ball orbit and
one iterate is a $\Delta$-isospectral invariant. The prior results
on $\Delta$-isospectral deformations that we are aware of  are
contained in the articles \cite{GM,PT,PT2} and concern
deformations of boundary conditions. To our knowledge, the only
prior results on $\Delta$-isospectral deformations  of the domain
are contained in \cite{MM}.  Marvizi-Melrose \cite{MM} introduce
new spectral invariants and prove certain rigidity results, but
they do not apparently settle the case of the ellipse (see also
\cite{A,A2} for further attempts to apply them to the ellipse). It
would be desirable to remove the symmetry assumption (to the
extent possible), but symmetry seems quite necessary for our
argument. Further discussion of prior results can be found in the
earlier arXiv posting of this article.

\subsection{\label{HDVARWT} Hadamard variation of the wave trace}

We now state a general result on the variation of the wave trace
on a domain with boundary under variations of the boundary.

To state the result, we need some notation. We denote by
\begin{equation} \label{EBSB} E_B(t) = \cos \big( t \sqrt{ -
\Delta_{B}}\;\big), \;\;\; \mbox{resp.}\;\;S_B(t) = \frac{\sin
\big(t \sqrt{- \Delta_{B }}\big)}{\sqrt{ -\Delta_{B }}}
\end{equation} the even (resp. odd) wave operators of a domain $\Omega$ with boundary conditions
$B$. We recall that $E_B(t)$ has a distribution trace as a
tempered distribution on $\R$. That is, $E_B(\hat{\rho}) =
\int_{\R} \hat{\rho}(t) E_B(t) dt$ is of trace class for any
$\hat{\rho} \in C_0^{\infty}(\R)$; we refer to \cite{GM2,PS} for
background.

The Poisson relation of a manifold with boundary gives a precise
description of the singularities of this distribution trace in
terms of periodic transversal  reflecting rays of the billiard
flow, or equivalently periodic points of the billiard map.
 For the definitions of `billiard map', `clean', `transversal
reflecting rays' etc. we refer to \cite{GM,GM2,PS}. A periodic
point of the billiard map $\beta: B^*
\partial \Omega \to B^*
\partial \Omega$ on the unit ball bundle $B^* \partial \Omega$ of the boundary
corresponds to a billiard trajectory, i.e an orbit of the billiard
flow $\Phi^t$ on $S^* \Omega$. We define the `length' of the
periodic orbit of $\beta$ to be the length of the corresponding
billiard trajectory in $S^* \Omega$.   Note that the `period' of a
periodic point of $\beta$ is ambiguous since it could refer to
this length or to the power of $\beta$.  We also denote by
$Lsp(\Omega)$ the length spectrum of $\Omega$, i.e.  the set of
lengths of closed billiard trajectories.  The perimeter of
$\Omega$ is denoted by $|\partial \Omega|$.

 In the following
deformation theorem, the boundary conditions are fixed during the
deformation and we therefore do not include them in the notation.

\begin{maintheo} \label{VARWTintro} Let $\Omega_0 \subset \R^n$ be a $C^{\infty}$
Euclidean  domain with the property that the fixed point sets of
the billiard map are clean.  Then, for any $C^1$ variation of
$\Omega_0$ through $C^{\infty}$ domains $\Omega_{\epsilon}$, the
variation of the wave traces $\delta Tr \cos\big(  t
\sqrt{-\Delta}\big)$, with  Dirichlet (or Neumann)  boundary
conditions is a classical co-normal distribution for $t \not= m
|\partial \Omega_0|$ ($m \in \Z$) with singularities contained in
$Lsp(\Omega_0)$. For each $T \in Lsp(\Omega_0)$ for which the set
$F_T$ of periodic points of the billiard map $\beta$ of
length $T$ is a $d$-dimensional clean fixed point set consisting
of transverse reflecting rays,
 there exist non-zero constants $C_{\Gamma}$
independent of $\dot{\rho}$ such that, near $T$, the leading order
singularity is
$$\delta \;Tr \; \cos\big( t \sqrt{-\Delta}\big) \sim -\frac{t}{2}\; \Re \big\{ \big( \sum_{\Gamma \subset F_T} C_{\Gamma} \int_{\Gamma} \dot{\rho}\; \gamma_1 \;  d \mu_{\Gamma}
\big) \; (t - T+ i 0)^{- 2 - \frac{d}{2}} \big\}, $$ modulo lower
order singularities. The sum is over the connected components $\Gamma$ of $F_T$. (Here
$\delta=\frac{d}{d\epsilon}|_{\epsilon=0}$. See (\ref{gamma}) for
the definition of $\gamma_1$).

\end{maintheo}

Here, the function $\gamma_1$ on $B^*
\partial \Omega$ is  defined in (\ref{gamma}) and appeared earlier
in \cite{HZ}. The densities $d\mu_{\Gamma}$ on the fixed point
sets of $\beta$ and its powers are the canonical densities defined
in Lemma 4.2 of \cite{DG}, and further discussed in
\cite{GM,PT,PT2}. The constants $C_{\Gamma}$ are explicit and
depend on the boundary conditions. We suppress the exact formulae
since we do not need them, but their definition is reviewed in the
course of the proof.

To clarify the dimensional issues, we note that there are four
closely related definitions of the set of closed billiard
trajectories (or closed orbits of the billiard map). The first is
the fixed point set of the billiard flow $\Phi^T$ at time $T$ in
$T^* \Omega$. The second is the set of unit vectors in the fixed
point set. The third is the fixed point set of the billiard flow restricted to $T^*_{\partial \Omega} \Omega$, the set of covectors with foot points at the boundary. The fourth is the set of periodic points of the billiard
map $\beta$ on $B^*
\partial \Omega$ of length $T$,  where  as above the length is defined by the length of the corresponding
billiard trajectory.  The dimension $d$ refers to the dimension of
the latter.
 In the case of the ellipse, for instance, $d = 1$; the periodic
 points of a given length form invariant curves for $\beta$.

To prove Theorem \ref{VARWTintro}, we use the Hadamard variational
formula for the Green's kernel to give an exact formula for the wave trace variation
(Lemma \ref{HDVARTR}). We then prove that it is a classical conormal distribution
 and calculate its principal symbol.

It is verified in \cite{GM} that the ellipse satisfies the
cleanliness assumptions. We then have,

\begin{maincor} \label{VARWTell}  For any $C^1$ variation of an ellipse
through  $C^{\infty}$ domains $\Omega_{\epsilon}$,   the leading
order singularity of the wave trace variation is,
$$\delta \;Tr \; \cos \big( t \sqrt{-\Delta}\big) \sim -\frac{t}{2}\; \Re \big\{ \big( \sum_{\Gamma \subset F_T}
C_{\Gamma} \int_{\Gamma} \dot{\rho}\; \gamma_1 \;  d \mu_{\Gamma}
\big) \; (t - T+ i 0)^{- \frac{5 }{2} }\big\}, $$ modulo lower order
singularities, where the sum is over the connected components $\Gamma$ of
the set $F_T$ of periodic points of $\beta$ (and its powers) of length $T$.

\end{maincor}

\subsection{\label{FLATINTRO} Flatness issues}

We now  discuss an apparently new flatness issue in isospectral
deformations. The rather technical assumption that
$\Omega_{\epsilon}$ is a $C^1$ family of $C^{\infty}$ domains
rather than a $C^{\infty}$ family in the $\epsilon$ variable is made to
deal with a somewhat neglected and obscure point about isospectral
deformations. Isospectral deformations are curves in the
`manifold' of domains. The curve might be a non-trivial  $C^{\infty}$
family in ${\epsilon}$ but the first derivative
$\dot{\rho}$ might vanish at ${\epsilon} = 0$.  Thus,
 infinitesimal spectral rigidity   is at
least apparently weaker than spectral rigidity. We impose the
$C^1$ regularity to allow us to reparameterize the family and show
that the first derivative of any $C^1$ re-parametrization must be
zero. This is not the primary focus of Theorem \ref{RIGID}, but
with no additional effort the proof extends to the $C^1$ case.

This flatness issue does not seem to have arisen before in inverse
spectral theory, even when the main conclusions are derived from
infinitesimal rigidity.  The main reason is that first order
perturbation theory very often requires analytic perturbations
(i.e. analyticity in the deformation parameter $\epsilon$), and so
most (if not all) prior results on isospectral deformations assume
that the deformation is real analytic. But our proof is based on
Hadamard's variational formula, which  is  valid for $C^1$
perturbations of domains and so we can study this more general
situation. Further, the prior spectral rigidity results (e.g.
\cite{GK}) are proved for an open set of domains and metrics and
therefore flatness at all points implies triviality of the
deformations. We are only deforming the one-parameter family of
ellipses and therefore cannot eliminate flat isospectral
deformations  by that kind of argument. We also note that there
could exist continuous
 but non-differentiable  isospectral
deformations.
\subsection{Pitfalls and complications}

The route taken in the proof of  Theorem \ref{RIGID}, and the
flatness issues just discussed, reflect certain technical issues
that arise in the inverse problem.

 First is the issue of
multiplicities in the eigenvalue spectrum or in the length
spectrum. The multiplicities of the $\Delta$-eigenvalues of the
ellipse (for either Dirichlet or Neumann boundary conditions)
appear to be almost completely unknown. If a sufficiently large
portion of the eigenvalue spectrum were simple (i.e. of
multiplicity one), one could simplify the proof of Theorem
\ref{RIGID} by working directly with the eigenfunctions and their
semi-classical limits (as in the first arXiv posting of this
article). The dual  multiplicity of the length spectrum is also
largely unknown for the ellipse. Without length spectral
simplicity one cannot work with the wave trace invariants. Our
proof relies  on the observation in \cite{GM} that the
multiplicities have to be one (modulo the symmetry) for periodic
orbits that creep close enough to the boundary.

Second is the issue of cleanliness. Theorem  \ref{VARWTintro} and
Corollary \ref{VARWTell} would  apply to any of the deformed
domains $\Omega_{\epsilon}$ if the fixed points sets were known to
be clean. One could then use the conclusion of Corollary
\ref{VARWTell}  to rule out flat isospectral deformations.
However, we do not know that the fixed point sets are clean for
the deformed domains even though we do know that they have the
same wave trace singularities as the ellipse. Equality of the wave
traces for isospectral deformations of ellipses  shows that the
periodic points of $\beta$ of $\Omega_{\epsilon}$ can never be
non-degenerate. Hence the deformations are very non-generic. It is
 plausible that equality of wave traces forces the sets of
periodic points to be clean invariant curves of dimension one. But
we do not know how to prove this kind of inverse result at this
time.

\section{\label{HASECT} Hadamard variational formula for wave traces}

In this section we  consider the Dirichlet (resp. Neumann)
eigenvalue problems for a one parameter family of smooth Euclidean
domains $\Omega_{\epsilon} \subset \R^n$,
\begin{equation} \label{EIG}  \left\{ \begin{array}{ll} - \Delta_{B_ \epsilon} \Psi_{j}(\epsilon)  =
\lambda_j^2(\epsilon) \Psi_{j}(\epsilon) \;\; \mbox{in}\;\;
\Omega_{\epsilon}, & \;\;
\\ \\ B_{\epsilon} \Psi_{j}(\epsilon)
 = 0 ,
\end{array} \right. \end{equation}
where the boundary condition $B_{\epsilon}$ could be $B_{\epsilon}
\Psi_j (\epsilon) = \Psi_j(\epsilon)|_{\partial
\Omega_{\epsilon}}$ (Dirichlet) or $
\partial_{\nu_{\epsilon}} \Psi_{j}(\epsilon)|_{\partial
\Omega_{\epsilon}}$ (Neumann). Here, $\lambda_j^2(\epsilon)$ are the
eigenvalues of $-\Delta_{B_\epsilon}$, enumerated in order and with
multiplicity, and  $\partial_{\nu_{\epsilon}}$ is the interior
unit normal to $\Omega_{\epsilon}$. We do not assume that
$\Psi_j(\epsilon)$ are smooth in $\epsilon$.   We now review the
Hadamard variational formula for the variation of Green's kernels,
and adapt the formula  to give the variation of the (regularized)
trace of the wave kernel. Our references are
\cite{G,Pee,FTY,O,FO}.

We further  denote by  $d q$ the surface measure on the boundary
$\partial \Omega$ of a domain $\Omega$, and by $\rr u =
u|_{\partial \Omega}$ the trace operator. We further denote  by
$\rr^D u =
\partial_{\nu} u |_{\partial \Omega}$ the analogous Cauchy data
trace for the Dirichlet problem. We simplify the notation for the
following boundary traces $K^b(q', q) \in \dcal'(\partial \Omega
\times \partial \Omega)$ of a Schwartz kernel $K(x, y) \in
\dcal'(\R^n\times \R^n)$ (or more precisely a distribution defined
in a neighborhood of $\partial \Omega \times \partial \Omega)$:
\begin{equation} \label{DGa} K^b(q',q)= \left\{ \begin{array}{ll}  \big( \rr_{q'} \rr_{q} N_{\nu_{q'}} N_{\nu_{q}} K\big)
(q', q), & \mbox{ Dirichlet}, \\ & \\
 (\nabla^T_{q'} \nabla_{q}^T \rr_{q'}
\rr_{q} K)(q',q) + \big(\rr_{q'} \rr_{q} \Delta_x K \big)(q',q), &
\mbox{Neumann}.
\end{array} \right.
\end{equation}
Here, the subscripts $q', q$ refer to the variable involved in the
differentiating or restricting. According to convenience, we  also
indicate this by subscripting with indices $1,2$ referring to the
first, resp. second, variable in the kernel. For instance,
$\frac{\partial}{\partial_{\nu_{q'}}} K(q',q) =
\frac{\partial}{\partial_{\nu_1}} K(q', q)$. We also use the notations
$\partial_{\nu}$ and $\frac{\partial}{\partial \nu}$
interchangeably to refer to the inward normal derivative. Also, $N_\nu$ is any smooth vector field in $\Omega$ extending $\nu$.

We are principally interested in $K(x, y) = S_B(t, x, y)$.
 In the Dirichlet, resp. Neumann,
case then we have, \begin{equation} \label{b} S_B^b(t, q', q) =
\left\{\begin{array}{l}
\rr_{q'}^D \rr_{q}^D S_D(t, q', q), \;\; \mbox{resp.}\\ \\
 \nabla_{q'}^T \nabla_{q}^T
\rr_{q'} \rr_{q} S_N(t, q', q) + \rr_{q'} \rr_{q}\; \Delta\; S_N(t, q', q).
\end{array} \right. \end{equation}

\begin{lem} \label{HDVARTR}  The variation of the wave trace with
 boundary conditions $B$ is given by,
$$\delta\; Tr\; E_B(t) = - \frac{t}{2} \int_{\partial \Omega_0} S_B^b(t, q, q) \dot{\rho}(q) dq. $$
\end{lem}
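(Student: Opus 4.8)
The plan is to combine the classical Hadamard variational formula for the Green's kernel with the spectral representation of the wave trace. First I would recall the Hadamard formula for the variation of the Dirichlet (resp. Neumann) resolvent or Green's kernel: if $G_\la(x,y)$ denotes the Green's kernel at energy $\la$, then $\delta G_\la(x,y) = \int_{\partial\Omega_0} G_\la^b(q;x,y)\,\dot\rho(q)\,dq$, where $G_\la^b$ is the appropriate boundary trace (normal derivatives in both the $x$ and $y$ slots in the Dirichlet case, tangential derivatives plus the $\Delta$ correction in the Neumann case — exactly the boundary operation encoded in (\ref{DGa}), (\ref{b})). This is the content of the references \cite{G,Pee,FTY,O,FO}, so I may cite it. Since $S_B(t)$ is built from the same spectral data as the resolvent, the analogous formula holds: $\delta S_B(t,x,y) = c\int_{\partial\Omega_0} \big(\text{boundary trace of } S_B(t)\big)(q;x,y)\,\dot\rho(q)\,dq$ for a universal constant $c$ coming from writing $\frac{\sin(t\sqrt{-\Delta})}{\sqrt{-\Delta}}$ via the functional calculus; the $t$-dependence will enter through differentiating the eigenvalue contributions.

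The cleanest route, which I would actually carry out, is to work directly with the heat/wave trace via eigenvalues. Write $Tr\,E_B(t) = \sum_j \cos(t\la_j)$ and differentiate term by term (justified as a distributional identity after pairing with $\hat\rho \in C_0^\infty$). Each eigenvalue variation is given by the Hadamard–Rayleigh formula: for Dirichlet, $\delta(\la_j^2) = -\int_{\partial\Omega_0} |\partial_\nu \Psi_j|^2 \,\dot\rho\,dq$, and the Neumann analogue with $|\nabla^T\Psi_j|^2 - \la_j^2|\Psi_j|^2$ on the boundary; these are exactly the diagonal boundary traces $S_B^b$ applied to the spectral projector, by (\ref{DGa}). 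Hence $\delta\cos(t\la_j) = -t\sin(t\la_j)\,\delta\la_j = -\frac{t}{2\la_j}\sin(t\la_j)\,\delta(\la_j^2)$. Summing over $j$ and recognizing $\sum_j \frac{\sin(t\la_j)}{\la_j}\,[\text{boundary trace of }\Psi_j\otimes\Psi_j](q,q)$ as precisely the diagonal boundary trace $S_B^b(t,q,q)$ of the odd wave kernel $S_B(t) = \sum_j \frac{\sin t\la_j}{\la_j}\Psi_j\otimes\Psi_j$, I obtain
$$\delta\,Tr\,E_B(t) = -\frac{t}{2}\int_{\partial\Omega_0} S_B^b(t,q,q)\,\dot\rho(q)\,dq,$$
which is the claim.

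The main obstacle is rigor in the two interchanges: differentiating the spectral sum in $\epsilon$ term by term, and restricting the wave kernel and its derivatives to the (twisted) diagonal of the boundary. For the first, I would note that $Tr\,E_B(t)$ is only a tempered distribution, so the identity must be established after smoothing against $\hat\rho \in C_0^\infty(\R)$, using that $E_B(\hat\rho)$ is trace class (as recalled in the excerpt) and that the family $\epsilon \mapsto E_{B_\epsilon}(\hat\rho)$ is $C^1$ in trace norm — this follows from the $C^1$ dependence of the domains and standard perturbation theory for the smoothed functional calculus, and lets me move $\delta$ inside the trace. For the second, the boundary restrictions in (\ref{DGa})–(\ref{b}) are legitimate because $S_B(t,x,y)$ is smooth up to the boundary away from the diagonal and, near the diagonal, the relevant normal/tangential derivatives of the wave kernel restrict to well-defined distributions on $\partial\Omega_0\times\partial\Omega_0$ (this is exactly the wave-front/propagation-of-singularities input that is used in the sequel to identify $S_B^b$ with a conormal distribution). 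I would remark that these restrictions are the same ones appearing in the Poisson-relation analysis, so no new regularity theory is needed beyond what is already invoked for Theorem \ref{VARWTintro}.
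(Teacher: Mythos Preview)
Your eigenvalue-perturbation argument is essentially the paper's second derivation, which it presents as a heuristic check (``to navigate the formulae''). The paper's primary proof instead takes the route you only sketch in your first paragraph: it starts from the classical Hadamard formula for the resolvent kernel $G_B(\lambda,x,y)$, uses the Laplace-type identity $R_B(\lambda)=\int_0^\infty e^{-i\lambda t}S_B(t)\,dt$ to transfer the variational formula to the time domain, and then a convolution/angle-addition computation collapses $\int_0^\tau \partial_{\nu_2}E_D(\tau-t')\,\partial_{\nu_1}S_D(t')\,dt'$ to $\frac{\tau}{2}\partial_{\nu_1}\partial_{\nu_2}S_D(\tau)$. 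The advantage of the resolvent route is that it sidesteps the differentiability of individual eigenvalue branches: for multiple eigenvalues the map $\epsilon\mapsto\lambda_j(\epsilon)$ need not be $C^1$, and your line $\delta(\lambda_j^2)=-\int|\partial_\nu\Psi_j|^2\dot\rho$ is only literally correct for simple eigenvalues. The paper patches this in its second derivation by summing over an orthonormal basis of each eigenspace and noting that the \emph{sum} $\sum_k\delta(\lambda_{j,k}^2)$ equals the trace of the boundary-restricted spectral projector, which is basis-independent; you should make the same move explicit.

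Two small points. First, your sign in the Hadamard--Rayleigh formula is inconsistent with the paper's conventions (inward normal $\nu$, deformation $x\mapsto x+\rho_\epsilon\nu$): with those conventions $\delta(\lambda_j^2)=+\int_{\partial\Omega_0}(\partial_\nu\Psi_j)^2\dot\rho\,dq$, and this is what produces the overall $-\tfrac{t}{2}$ after $\delta\cos(t\lambda_j)=-\tfrac{t}{2\lambda_j}\sin(t\lambda_j)\,\delta(\lambda_j^2)$. Second, your justification for interchanging $\delta$ with the trace is fine in spirit, but the paper makes this concrete by pairing against $\hat\psi\in C_0^\infty$ and working with the smoothing $\int\hat\psi(t)e^{-i\lambda t}E_B(t)\,dt=\int i\mu\,R_B(\mu)\psi(\mu-\lambda)\,d\mu$, whose kernel is genuinely smooth; this is where the resolvent formulation pays off.
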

We summarize by writing,
$$ \delta \; Tr \; E_B(t)  = -\frac{t }{2} Tr_{\partial \Omega_0}
\;\dot{\rho}\; S_B^b. $$ We prove the Lemma by relating the variation
of the  wave trace to the known variational formula for the
Green's function (resolvent kernel). We now review the latter.

\subsection{Hadamard variational formulae}

 In the Dirichlet case, the classical Hadamard variational formulae states that, under a
sufficiently smooth deformation $\Omega_{\epsilon}$,
\begin{equation} \label{deltaGD} \delta G_D (\lambda, x, y) = - \int_{\partial
\Omega_0} \frac{\partial}{\partial \nu_2} G_D(\lambda, x, q)
\frac{\partial}{\partial \nu_1} G_D(\lambda, q, y) \dot{\rho}(q)
\,dq.
\end{equation} In the Neumann case,
\begin{equation} \label{deltaGN} \delta G_N (\lambda, x, y) =  \int_{\partial \Omega_0}
\nabla^T_2 G_N(\lambda, x, q) \cdot \nabla^T_1 G_N(\lambda, q, y)
\dot{\rho}(q) \,dq - \lambda^2 \int_{\partial \Omega_0}  G_N(\lambda,
x, q) G_N(\lambda, q, y) \dot{\rho}(q) \,dq. \end{equation} Above,
the subscript refers to the variable with respect to which the
derivative is taken and $\nabla^T$ denotes the  derivative with
respect to the unit tangent vector.

We briefly review the  proof of the Hadamard variational formula
to clarify the definition of $\delta G(\lambda, x, y)$ and of the
other kernels. Following \cite{Pee}, we write the inhomogeneous
problem
$$\left\{ \begin{array}{l} (-\Delta + \lambda^2) u = f, \;\;
\mbox{in}\;\; \Omega; \\ \\
u = 0 \;\;(\mbox{resp.}\;  \partial_{\nu} u = 0) \;\; \mbox{on}\;\;
\partial \Omega
\end{array} \right.
$$
in  terms of  the energy integral
 $$ \begin{array}{lll}  E(u,v) &= & \int_{\Omega} \nabla u \cdot \nabla v d x + \lambda^2 \int_{\Omega} u v dx \\ && \\ &=&
 \int_\Omega v (-\Delta + \lambda^2)  u\,dx  +
\int_{\partial\Omega} v
\partial_{\nu} u\,dq,
\end{array}$$
where $\partial_{\nu}$ is the outer unit normal. The inhomogeneous
problem is to solve
$$E(u, v) = \int_{\Omega} f v dx, $$
where $v$ is a smooth test function which vanishes to order one
(resp. 0) on $\partial \Omega$ for the Dirichlet (resp. Neumann)
problem. We denote the  energy density by
 $e(u,v)=  \nabla u \cdot \nabla v + \lambda^2 u v$.

We now vary the problems over a one-parameter family of domains.
As mentioned above, we use  one-parameter families of
diffeomorphisms $\phi_{\epsilon}$ of a neighborhood of $\Omega_0
\subset \R^n$  to define the one-parameter families
$\Omega_{\epsilon} = \phi_{\epsilon}(\Omega_0)$ of domains. We
assume $\phi_{\epsilon}$ to be a $C^1$ curve of
diffeomorphisms with $\phi_0 = id$.

The variational derivative of the solution is defined as follows:
Let $u_{\epsilon} \in H^s(\Omega_{\epsilon})$. Then
$\phi_{\epsilon}^* u_{\epsilon} \in H^s(\Omega_0)$. Put  $X =
\frac{d}{d\epsilon}|_{\epsilon=0} \phi_{\epsilon}, $  and put
$$\theta_X u = \frac{d}{d \epsilon} \phi_{\epsilon}^* u_{\epsilon} |_{\epsilon = 0}. $$
Assume that $\theta_X u \in H^s(\Omega_0)$ and that $u \in H^{s +
1}(\Omega_0)$. Then $\dot{u}$ exists and $\theta_X u = \dot{u} + X
u. $ Further, let $v $ be a test function on $\Omega_0$ and use
$\phi_{\epsilon}^{-1 *} v$ as a test function on
$\Omega_{\epsilon}$. Now re-write the boundary problems as
$$\int_{\Omega_{\epsilon}} e(u, (\phi_{\epsilon}^{-1})^* v) dx = \int_{\Omega_{\epsilon}}
f_{\epsilon} ((\phi_{\epsilon}^{-1})^* v) dx. $$ Changing
variables, one pulls back the equation to $\Omega_0$ as
$$\int_{\Omega_0} e_{\epsilon}(\phi_{\epsilon}^* u_{\epsilon}, v) \phi_{\epsilon}^* dx=
\int_{\Omega_0} (\phi_\epsilon^* f_\epsilon) v \phi_\epsilon^* dx, \;\; \mbox{where}\;\;
e_{\epsilon}(u, v) = \phi_{\epsilon}^* (e(\phi_{\epsilon}^{-1 *}
u, \phi_{\epsilon}^{-1 *} v)). $$ Assuming that $\dot{u}, \theta_X
u \in H^s(\Omega_0)$ and that $u \in H^{s + 1}(\Omega_0)$, so that
$\theta_X u = \frac{d}{d \epsilon} \phi_{\epsilon}^* u_{\epsilon}
|_{\epsilon = 0} $ exists as a limit  in $H^s(\Omega_0)$,
 we have (by the computations
of \cite{Pee} (8) and (10)) that
 \begin{equation} \label{PEETRE} \begin{array}{lll}  \int_{\Omega_0} \dot{u} (-\Delta + \lambda^2)  v\,d x & = &
 \int_{\Omega_0}\dot{f} v\,dx +\int_{\partial\Omega_0} f v \dot{\rho} \,d q +  \int_{\partial
 \Omega_0} (\nabla u \cdot \nabla v - \lambda^2 uv ) \dot{\rho} dq \\ && \\&&  + \left\{ \begin{array}{ll}
 - \lambda^2 \int_{\partial
 \Omega_0}  uv  \dot{\rho} d q  & \mbox{Dirichlet}, \\ & \\
 0& \mbox{Neumann}\end{array} \right. \end{array}. \end{equation}

To obtain (\ref{deltaGD})-(\ref{deltaGN}), at least formally, one
puts $ u_{\epsilon}(\phi_{\epsilon}(x)) = G_{B,\epsilon} (\lambda,
\phi_{\epsilon}(x), y), v(x) = G_{B, 0}(\lambda, y, x),$ and
$\phi_{\epsilon}^* f_{\epsilon} = \delta_y(x).$ Thus, $\delta
G_B(\lambda, x, y) = \frac{d}{d \epsilon}|_{\epsilon = 0} G_{B,
\epsilon}(\lambda, \phi_{\epsilon}(x), y)$. Assuming $y \in
\Omega^o$ (the interior), then $y \in \Omega_\epsilon$ for sufficiently
small $\epsilon$ and one easily verifies that (\ref{PEETRE})
implies (\ref{deltaGD})-(\ref{deltaGN}). The Green's kernel
depends on $\epsilon$ as smoothly as the coefficients of operator
$\tilde{\Delta}_{\epsilon}$ on $\Omega_0$ defined by the pulled
back energy form. Indeed, the resolvent is an analytic function of
the Laplacian.

\subsection{Proof of Lemma \ref{HDVARTR}}

Rather than the Green's function, we are interested in the
Hadamard variational formula for the wave kernels $E_B(t), S_B(t)
 $ (\ref{EBSB}), or more
precisely, for their distribution traces.  In fact, by definition
of the distribution trace, we only need the variational formula
for traces of variations $\delta \int_{\R} e^{ - i \lambda t}
\hat{\psi}(t) E_B(t) dt$  of integrals of these kernels against
test functions $\hat{\psi} \in C_0^{\infty}(\R)$, which are
simpler because the Schwartz kernels are  smooth.

We  derive the  Hadamard variational formulae for wave traces from
that of the Green's function by using the identities,
\begin{equation} i \lambda R_B(\lambda) = \int_0^{\infty} e^{-i
\lambda t} E_B(t) dt, \;\;\;\frac{d}{dt} S_B(t)  = E_B(t)
\end{equation} integrating by parts and using the finite propagation speed of
$S_B(t) $ to eliminate the boundary contributions at $t = 0,
\infty$. It follows that
\begin{equation}  R_B(\lambda) = \int_0^{\infty} e^{-i
\lambda t} \; S_B(t) dt.
\end{equation}
 As mentioned above, to obtain the variational formula for the singularity expansion
 of the wave trace,  we only need
variational formula for the smooth kernels
\begin{equation}\label{RTAU} \begin{array}{lll} \int_{\R}
\hat{\psi}(t) e^{- i \lambda t}\; E_B(t) dt = \int_{\R} i \mu R_B(
\mu) \psi (\mu-\lambda) d \mu,  \end{array} \end{equation} where
$R_B(\mu) = (- \Delta_B - \mu^2)^{-1}$ is the resolvent of $-
\Delta_B$. Here we assume that $\hat{\psi}$ is supported in $\R_+$
since in the wave trace we localize its support to the length of a
closed geodesic. In the Dirichlet case, it follows that
\begin{equation}\label{FOLLOWS}  \delta \int_{\R}
\hat{\psi}(t) e^{- i \lambda t}\; E_B(t) dt = \delta \int_{\R} i\mu
R_B( \mu) \psi (\mu-\lambda) d \mu. \end{equation} We then
derive variational formulae for wave traces. In the Dirichlet
case, it follows from (\ref{FOLLOWS}) and (\ref{deltaGD}) that $$\begin{array}{lll}
\delta \int_{\R} \hat{\psi}(t) e^{- i \lambda t}\; E_D(t) dt & = &
-i\int_{\R} \mu \psi(\mu-\lambda) \int_{\partial \Omega}
\partial_{\nu_2} G_D(\mu, x,  q) \partial_{\nu_1} G_D(\mu, q, y) \dot{\rho}(q) dq  d
\mu\\ && \\ & = &
- \int_{\R} \int_0^{\infty}  e^{- i \mu t}
\psi(\mu-\lambda) \int_{\partial \Omega}
\partial_{\nu_2} E_D(t, x,  q) \partial_{\nu_1} G_D(\mu, q, y) \dot{\rho}(q) dq  d
\mu dt \\ && \\ & = & -  \int_{\R} \int_0^{\infty} \int_0^{\infty}
e^{- i \mu (t + t')} \psi(\mu-\lambda) \int_{\partial \Omega}
\partial_{\nu_2} E_D(t, x,  q) \partial_{\nu_1} S_D(t', q, y) \dot{\rho}(q) dq  d
\mu dt dt' \\ && \\ & = & -  \int_0^{\infty} \int_0^{\infty} e^{-
i \lambda (t + t')} \hat{\psi}(t + t') \int_{\partial \Omega}
\partial_{\nu_2} E_D(t, x,  q) \partial_{\nu_1} S_D(t', q, y) \dot{\rho}(q) dq dt
dt'   \\ && \\ & = & -  \int_0^{\infty} \int_{\partial \Omega}
e^{- i \lambda \tau} \hat{\psi}(\tau) \left(\int_0^{\tau}
\partial_{\nu_2} E_D(\tau - t', x,  q) \partial_{\nu_1} S_D(t', q, y)
dt' \right) \dot{\rho}(q) d\tau dq .
\end{array}
$$
The inner integral is the same if we change the argument of $E_D$
to $t'$ and that of $S_D$ to $\tau - t'$.   We then average the
two,  set $x = y$, integrate over $\Omega$ and use the angle
addition formula for $\sin$ to obtain
\begin{equation} \delta\; Tr \int_{\R}\hat{\psi}(t) e^{- i \lambda
t}  E_D(t)d t =  -\frac{1}{2}  \int_{\R} t  \hat{\psi}(t) e^{- i
\lambda t} \int_{\partial \Omega} \partial_{\nu_1}
\partial_{\nu_2} S_D(t, q, q) \dot{\rho}(q) dq dt.
\end{equation}
This is the real part of wave trace variational formula stated in
the Lemma in the Dirichlet case, i.e. the variational formula for
$\delta\; Tr E_D(t).$ The proof in the Neumann case is similar and
left to the reader.

This concludes the proof of Lemma \ref{HDVARTR}.  To navigate the
formulae, we also give a derivation based on  the Hadamard
variational formulae for eigenvalues.
 When $\lambda_j^2(0)$ is a simple eigenvalue (i.e. of
 multiplicity one), then Hadamard's variational formula for Dirichlet eigenvalues of Euclidean domains states that
$$\delta ({\lambda}_j^2) = \int_{\partial \Omega_0} ( \partial_{\nu}
\Psi_{j}|_{\partial \Omega_0})^2  \dot{\rho}(q)\, dq, $$ where $\Psi_j$ is an $L^2$ normalized eigenfunction for the eigenvalue $\lambda_j^2(0)$ and $dq$
is the induced surface measure. See \cite{G}.  The same
comparison shows that if the eigenvalue $\{\lambda_{j}^2(0)\}_{k =
1}^{m(\lambda_{j}(0))} $  is
 multiple and if $\{\lambda_{j, k}^2(\epsilon)\}_{k = 1}^{m(\lambda_j(0))}$ is
the perturbed set of eigenvalues, then \begin{equation}
\label{HDVAR} \delta  \sum_{k = 1}^{m(\lambda_{j}(0))} \lambda_{j,
k}^2 = \sum_{k = 1}^{m(\lambda_{j}(0))} \int_{\partial \Omega_0}
(\partial_{\nu} \Psi_{j, k}|_{\partial \Omega_0})^2 \dot{\rho}(q) \,
dq= \int_{\partial \Omega_0}
\partial_{\nu_1} \partial_{\nu_2} \Pi_{\lambda_j(0)}(q,q)\, \dot{\rho}(q) dq,
\end{equation}
where $\{\Psi_{j,k}\}_{k=1}^{m(\lambda_j(0))}$ is an orthonormal
basis for the eigenspace of the multiple eigenvalue
$\lambda_j^2(0)$ and $\Pi_{\lambda_j(0)}(x, y)$ is its spectral
projections kernel. Since  $\delta \lambda_j = \frac{\delta
(\lambda_j^2)}{2 \lambda_j},$ by (\ref{HDVAR}) we have
$$ \begin{array}{lll} \delta \; Tr \; E_B(t)= \delta\; \sum \cos(t\lambda_{j,k})= -t \sum_j  \big(\sum_{k=1}^{m(\lambda_j(0)} \delta (\lambda_{j,k}^2)\big) \frac{\sin(t\lambda_{j}(0))}{2\lambda_{j}(0)}=
 \frac{-t}{2} \int_{\partial \Omega_0}
\partial_{\nu_1} \partial_{\nu_2} S_B(t, q, q) \dot{\rho}(q) dq.
\end{array}$$  Hence Lemma \ref{HDVARTR} follows in the Dirichlet case.

There exist similar Hadamard variational formulae in the Neumann
case. When the eigenvalue is simple, we have
\begin{equation} \label{HDVARN} \delta ({\lambda}_j^2) = \int_{\partial \Omega_0}
\left(|\nabla^T_q (\Psi_{j} |_{\partial \Omega_0}(q))|^2 -
\lambda_j^2(0) (\Psi_{j}|_{\partial \Omega_0}(q))^2 \right)
\dot{\rho}(q) \, dq,
\end{equation}
For a multiple eigenvalue we sum over the expressions over an
orthonormal basis of the eigenspace. The result does not depend on
a choice of orthonormal basis. Similar computation using (\ref{HDVARN}) follows to show Lemma \ref{HDVARTR} for the Neumann case.

\section{Proof of  Theorem  \ref{VARWTintro}}

We now study the singularity expansion of $\delta Tr \cos (t
\sqrt{-\Delta_B})$ and prove Theorem \ref{VARWTintro}. At
first sight, one could do this in two ways: by taking the
variation of the spectral side of the formula, or by taking the
variation of the singularity expansion. It seems simpler and
clearer to do the former since we do not know how the invariant
tori of the integrable elliptical billiard deform under an
isospectral deformation. In this section we will drop the
subscript $0$ in $\Omega_0$.

The variational formula for $\delta Tr \cos( t
\sqrt{-\Delta_B})$  is given in Lemma \ref{HDVARTR}. In the Dirichlet case, by  (\ref{DGa})-(\ref{b}),
\begin{equation} \label{DG} Tr_{\partial \Omega}\; \dot{\rho}\;
S_D^b = \pi_* \;  \Delta^* \dot{\rho}\;\big(  \rr_1 \rr_2
N_{\nu_1} N_{\nu_2}S_D(t, x, y)\big),
\end{equation} where $N_{\nu}$ is any smooth vector field in
$\Omega$ extending $\nu$, and where the subscripts indicate the
variables on which the operator acts. In the Neumann case by
(\ref{DGa})-(\ref{b}),
\begin{equation} \label{NG} Tr_{\partial \Omega}\; \dot{\rho}\;
S_N^b = \pi_*\; \Delta^* \dot{\rho} \; \left((\nabla^T_1 \nabla_2^T
\rr_1 \rr_2 - \rr_1 \rr_2\;\Delta_x) S_N(t, x, y) \right).
\end{equation} Here,  $\Delta:
\partial \Omega \to \partial \Omega \times \partial \Omega$ is the
diagonal embedding $q \to (q,q)$ and $\pi_*$ (the pushforward of
the natural projection $\pi: \partial \Omega \times \R \to \R$) is
the integration over the fibers with respect to the surface measure $dq$.
The duplication in  notation between the Laplacian and the
diagonal is regrettable, but both are standard and should not
cause confusion. Since $S_B(t, x, y)$ is microlocally a Fourier
integral operator near the transversal periodic reflecting rays of
$F_T$, it will follow from (\ref{DG}) that the trace is locally a
Fourier integral distribution near $t = T$.

We are assuming that the set of periodic points of the billiard
map corresponding to space-time billiard trajectories of length $T
\in Lsp(\Omega)$ is a submanifold $F_T$ of $B^*
\partial \Omega$.
 We thus fix $T \in Lsp(\Omega)$ consisting only
of periodic reflecting rays, i.e. we assume $T \not= m |\partial
\Omega|$ ($|\partial \Omega|$ being the perimeter)  for $m \in
\Z$. In order to study the singularity of the  boundary trace
 near a component $F_T$ of the fixed point set, we construct a
pseudo-differential cutoff $\chi_T = \chi_{T}(t, D_t, q, D_q) \in
\Psi^0_{}(\R \times \partial \Omega)$ whose complete symbol
$\chi_{T}(t, \tau, q, \zeta)$ has the form $\chi_T(q,
\frac{\zeta}{\tau})$ with $\chi_T(y, \zeta)$ supported in a small
neighborhood of the fixed point set  $F_T \subset B^*
\partial \Omega$, equals one in a smaller neighborhood, and in particular vanishes in a neighborhood of the glancing directions in  $S^*
\partial \Omega =
\partial (B^* \partial \Omega)$. Since the symbol of $\chi_T$ is independent of $t$ we will instead use $\chi_{T}(D_t, q, D_q)$. We may assume that the support of
the cutoff is invariant under the billiard map $\beta$. Therefore we need to study the operator
\begin{equation}\label{localizedtrace}\pi_* \Delta^* \;\dot{\rho} \;\chi_{T}(D_t, q', D_{q'}) \chi_{T}(D_t,q, D_{q}) S^b_B(t,q', q),
\end{equation} and compute its symbol. To do this we first study the operators $r$ and $S_B(t)$ and review their basic properties. Next we study the composition $$\chi_{T}(D_t, q', D_{q'}) \chi_{T}(D_t,q, D_{q}) S^b_B(t,q' ,q)$$ and compute its symbol. Finally in Lemma \ref{symboloftrace} we take composition with $\pi_* \Delta^* \; \dot{\rho}$ and calculate the symbol of (\ref{localizedtrace}).

\subsection{FIOs and their symbol}
We recall that the principal symbol $\sigma_I$ of a
Fourier integral distribution
$$I = \int_{\R^N} e^{i \phi (x, \theta) }
a(x, \theta) d \theta,\qquad \quad I\in I^m(M, \Lambda_{\phi}), $$ of order $m$ is defined in terms of the
parametrization
 $$\iota_{\phi}: C_{\phi} = \{(x, \theta): d_{\theta} \phi = 0\}
 \to (x, d_x \phi) \in \Lambda_{\phi} \subset T^* M$$
 of the associated Lagrangian $\Lambda_{\phi}$. It is a half density on $\Lambda_{\phi}$ given by $\sigma_I={(\iota_\phi)}_*(a_0 |d_{C_\phi}|^\half ) $  where $a_0$ is the leading term of the classical symbol
 $a \in S^{m+\frac{n}{4}-\frac{N}{2}}(M \times \mathbb R^N)$, $n=$dim$M$ and
$$d_{C_{\phi}}: = \frac{d c}{|D(c,
\phi_{\theta}')/D(x, \theta)|} $$ is the Gelfand-Leray form on
$C_{\phi}$ where $c$ is a system of coordinates on
$C_{\phi}$. For notation and background we refer to \cite{Ho}. When $I(x,y) \in I^m(X\times Y, \Lambda)$ is the kernel of an FIO it is very standard to use the symplectic form $\omega_X - \omega_Y$ on $X\times Y$ and define
$$\iota_{\phi}: C_{\phi} = \{(x,y, \theta): d_{\theta} \phi = 0\}
 \to (x, d_x \phi, y, -d_y \phi) \in \Lambda_{\phi} \subset T^* X \times T^*Y.$$ We will call $\Lambda_\phi$ the canonical relation of $I(x,y)$.

\subsection{The restriction operator $r$ as an FIO}

The restriction $\rr$ to the boundary satisfies,
 $\rr \in I^{ \frac{1}{4}}(\partial
\Omega \times \R^n, \Gamma_{\partial \Omega}), $  with the canonical relation
\begin{equation} \label{LAMBDADELTA}\Gamma_{\partial
\Omega}= \{(q, \zeta, q, \xi) \in T^*
\partial \Omega \times T^*_{\partial \Omega} \R^n; \xi|_{T_q
\partial \Omega} = \zeta\}. \end{equation}  The adjoint then satisfies
 $\rr^{ *}\in I^{ \frac{1}{4}}(\R^n \times \partial
\Omega, \Gamma_{\partial \Omega}^*)$, where
$$ \Gamma_{\partial \Omega}^* = \{(q, \xi, q, \zeta) \in
T^*_{\partial \Omega} \R^n \times T^*
\partial \Omega; \xi|_{T_q
\partial \Omega} = \zeta\}.$$
Here, $T^*_{\partial \Omega} \R^n$ is the set of co-vectors to
$\R^n$ with footpoint on $\partial \Omega$. We parameterize
$\Gamma_{\partial \Omega}$ (\ref{LAMBDADELTA}) by $T^{*
+}_{\partial \Omega}(\Omega)$, the inward pointing covectors,
using the Lagrange immersion
\begin{equation} \label{PARAMGAM} \iota_{\Gamma_{\partial \Omega}} (q, \xi) = (q, \xi|_{T_q(\partial \Omega)}, q, \xi). \end{equation}
To prove these statements, we introduce
 Fermi normal coordinates $(q, x_n)$  along
$\partial \Omega$, i.e.  $x = \exp_{q}( x_n \nu_{q})$ where $\nu_q$
is the interior unit normal at $q$.  Let
  $\xi = (\zeta, \xi_n) \in T_{(q,x_n)}^*\R^n$ denote the corresponding symplectically dual fiber
 coordinates. In these coordinates, the kernel of $r$ is given by
  \begin{equation} \label{OPAH}   \rr (q, (q',x'_n)) = C_n \int_{\R^n} e^{i \langle q -q', \zeta \rangle  - i x'_n \xi_n}  d\xi_n
  d\zeta.
  \end{equation}
  The phase $\phi(q, ( q',x'_n), (\zeta, \xi_n)) = \langle q - q', \zeta
   \rangle - x'_n \xi_n $ is non-degenerate and its critical set
   is   $C_{\phi} =\{(q, q', x'_n, \xi_n, \zeta); q' = q, x'_n = 0, \} $. The  Lagrange map
   $\iota_{\phi}:(q, q, 0, \xi_n, \zeta) \to (q, \zeta, q, \zeta, \xi_n) $ embeds $C_{\phi} \to T^*\partial \Omega \times T^*\R^n$ and maps onto $\Gamma_{\partial \Omega}$.
The adjoint kernel has the form $K^*(x, q) = \bar{K}(q, x)$ and
therefore has a similar oscillatory integral representation. It
is clear from ($\ref{OPAH}$) that the order of $r$ as an FIO is
$\frac{1}{4}$. Also, in the parametrization (\ref{PARAMGAM}), the
principal symbol of $r$ is $\sigma_r=|dq \wedge d \zeta \wedge d
\xi_n|^{\half}$.

\subsection{Background on parametrices for $S_B(t)$}
We first review the Fourier integral description of $E_B(t)$,
$S_B(t)$ microlocally near transversal reflecting rays. This is
partly for the sake of completeness, but  mainly because we need
to compute their principal symbols (and related ones) along the
boundary. Although the principal symbols are calculated in the
interior in \cite{GM2,PS} and elsewhere (see  Proposition 5.1 of
\cite{GM2}, section 6 of \cite{MM}  and section 6 of
\cite{PS}), the results do not seem to be stated along the
boundary (i.e. the symbols are not calculated at the boundary). The statements we need are contained in Theorem 3.1 of
\cite{Ch2} (and its proof), and we largely follow its
presentation.

We need to calculate the canonical relation and principal symbol
of the wave group, its derivatives and their restrictions to the
boundary. We begin by recalling that the propagation of
singularities theorem  for the mixed Cauchy-Dirichlet (or Neumann)
problem for the wave equation states that the wave front set of
the wave kernel satisfies,
$$WF (S_B(t, x, y)) \subset \bigcup_{\pm} \Lambda_{\pm}, $$ where $\Lambda_{\pm} = \{(t, \tau,x, \xi, y, \eta):\; (x, \xi)= \Phi^t(y, \eta), \;\tau =\pm |\eta|_y\} \subset T^*(\mathbb R \times \Omega \times \Omega)$ is the graph of the generalized (broken) geodesic flow, i.e. the billiard flow $\Phi^t$. For
background we refer to \cite{GM2,PS,Ch2} and to \cite{Ho} (Vol.
III, Theorem 23.1.4 and Vol. IV, Proposition 29.3.2). For the
application to spectral rigidity, we only need a microlocal
description of wave kernels away from the glancing set, i.e. in
the hyperbolic set  microlocally near periodic transversal
reflecting rays. In these regions,  there exists a microlocal
parametrix due to Chazarain \cite{Ch2}, which is more fully
analyzed in \cite{GM2,PS} and applied to the ellipse in \cite{GM}.

The microlocal parametrices for $E_B $ and $S_B$ are constructed
in the ambient space $\R \times \R^n \times \Omega$. Since $E_B =
\frac{d}{dt} S_B$ it suffices to consider the latter. Then there
exists a Fourier integral (Lagrangian) distribution,
$$\tilde{S}_B(t, x, y) = \sum_{j=-\infty}^\infty S_j(t, x, y), \;\;\mbox{with}\;\;
S_j\in I^{-\frac{1}{4} - 1} (\R \times \R^n \times \Omega,
\Gamma_\pm^j)$$ which microlocally approximates $S_B(t,x,y)$ modulo a
smooth kernel near a transversal reflecting ray. The sum is locally finite hence well-defined. The canonical
relation of $\tilde{S}_B$  is contained in a union
$$\Gamma = \bigcup_{\pm,\; j \in \Z} \Gamma^j_{\pm} \subset T^*(\R \times \R^n \times
\Omega)$$ of canonical relations  $\Gamma^j_{\pm}$ corresponding
to the graph of the broken geodesic flow with $j$ reflections.
Notice we let $j\in \mathbb Z$ which is different from \cite{Ch2} where $j$ goes from $0$ to $\infty$ and where the two graphs $\Gamma_\pm^j$ and $\Gamma_\pm^{-j}$ are combined.

To describe $\Gamma^j_{\pm}$, we introduce some useful notation
from \cite{Ch2} with a slight adjustment. We have two Hamiltonian
flows $g^{\pm t}$ corresponding to the Hamiltonians $\pm|\eta|$.
For $(y,\eta)$ in $T^* \Omega$ we define the first impact times
with the boundary,
$$\left\{ \begin{array}{l} t_\pm^{1}(y,
\eta) = \inf\{t
> 0: \pi g^{\pm t}(y, \eta) \in \partial \Omega\}, \\ \\ t_\pm^{-1}(y, \eta) = \sup\{t < 0: \pi g^{\pm t}(y, \eta) \in \partial
\Omega\}. \end{array} \right.$$ The impact times are related by
 $t_\pm^{-1}=-t_\mp^{1}$. We define $t_\pm^j$ inductively for
$j>0$ res. $j<0$ to be the time of $j$-th reflection (i.e. impact
with the boundary) for the flow $g^{\pm t}$ as $t$ increases resp.
decreases from $t = 0$. Then we put
$$\left\{ \begin{array}{l} \lambda^1_\pm(y,\eta)=g^{\pm t^1_\pm(y,\eta)}(y,\eta) \in
T_{\partial \Omega}^* \Omega, \\ \\
\lambda^{-1}_\pm(y,\eta)=g^{\pm t^{-1}_\pm(y,\eta)}(y,\eta) \in
T_{\partial \Omega}^* \Omega. \end{array} \right. $$ Next we
define $\widehat{\lambda^1_\pm(y,\eta)}$ to be the reflection of
$\lambda^1_\pm(y,\eta)$ at the boundary. For any $(q,\xi) \in
T^*_q \R^n, q \in \partial \Omega$, the reflection  $\xi \to
\widehat{\xi}$ has the same tangential projection as $\xi$ but
opposite normal component.  Similarly we define
$\widehat{\lambda^{-1}_\pm(y,\eta)}$. Flowing
$\widehat{\lambda^1_\pm(y,\eta)}$ (resp.
$\widehat{\lambda^{-1}_\pm(y,\eta)}$) by $g^{\pm t}$ as $t$
increases (resp. decreases) and continuing the same procedure we
get
 $t^j_\pm(y,\eta)$ and $\lambda^j_\pm(y,\eta)$ for all $j\in \mathbb Z$. We also set $T^j_{\pm}=\sum_{k=1}^j t^k_{\pm}$ for $j>0$ and $T^j_{\pm}=\sum_{k=-1}^j t^k_{\pm}$ for $j<0$.

The canonical graph $\Gamma^j_{\pm}$ can now be written as
\begin{equation} \label{Gammaj}\Gamma^j_{\pm}=\left\{ \begin{array}{ll} \{(t, \tau, g^{\pm t}(y, \eta), y, \eta):\; \tau=\pm|\eta|_y\}
\qquad \qquad \quad \quad  j=0, \\ \\
\{(t,\tau, g^{\pm(t-T^j_\pm(y,\eta))}
\widehat{\lambda^j_\pm(y,\eta)}, y, \eta): \tau=\pm|\eta|_y \}
\qquad j\in \mathbb Z, j \neq 0.\end{array} \right. \end{equation}
For  each $j \in \mathbb Z$, $\bigcup_{\pm} \Gamma^j_{\pm}$ is the
union of two canonical graphs, which we refer to as its `branches'
or `components' (see figure $3.2$ of \cite{GM2} for an
illustration). These two branches arise because $S_B(t) =
\frac{1}{2i\sqrt{-\Delta_B}}( e^{i t \sqrt{-\Delta_B}} - e^{-i t
\sqrt{-\Delta_B}})$ is the sum of two Fourier integral operators
whose canonical relations are respectively the graphs of the
forward/backward broken geodesic flow and which correspond to the
two halves $\tau
> 0, \tau < 0$ of the characteristic variety $\tau^2 - |\eta|^2 =
0$ of the wave operator.

At the boundary, we  have four modes of propagation: in addition
to the two $\pm$ branches corresponding to $\tau>0$ and $\tau<0$,
there are two modes of propagation corresponding to the two
`sides' of $\partial \Omega$. To illustrate this we first discuss
the simple model of the upper half space.

\subsubsection{ Upper half space; a local model for one reflection}
Let $\R_+^n =\{(x', x_n) \in \R^{n-1} \times \R:  x_n \geq 0\}$ be
the upper half space. Denote by $S_0(t, x, y)$ the kernel of
$\frac{\sin (t \sqrt{-\Delta})}{\sqrt{-\Delta}}$ of Euclidean
$\R^n$. By the classical method of images,
$$\left\{ \begin{array}{l} S_D(t, x, y) = S_0(t, x, y) - S_0(t, x, y^*), \\ \\
S_N(t, x, y) =   S_0(t, x, y) + S_0(t, x, y^*) \end{array} \right.
$$ where $y^* \in \R^n_-$ is the reflection of $y$ through the boundary
$\R^{n-1} \times \{0\}$.


The canonical relation associated to $S_N$ and $S_D$ is the union
of the canonical relations of $S_0$ and of $S_0^* = S_0(t, x,
y^*)$. More precisely by our notation in (\ref{Gammaj})
$$WF(S_B(t,x,y)) \subset \Gamma^{0}_\pm \cup \Gamma^1_\pm \cup \Gamma^{-1}_\pm.$$ Note that this example is
asymmetric in past and future: the forward trajectory may
intersect boundary, but then backward one does not. Also, in this example for $j>1$ and $j<-1$ the graphs $\Gamma^j_\pm$ are empty.

\subsubsection{Symbol of $S_B(t,x,y)$ in the interior}
 In the boundaryless case of \cite{DG}, the half density symbol of
$e^{i t \sqrt{-\Delta_g}}$ is a constant multiple (Maslov factor)
of the canonical graph volume half density $\sigma_{can} = |dt
\wedge dy \wedge d \eta |^{\frac{1}{2}}$
 on $\Gamma_+$ in the graph parametrization $(t, y, \eta) \to \Gamma_+=
(t, |\eta|_g, g^t(y, \eta), y, \eta)$. In the boundary case for
$E_B(t)$ the symbol in the interior is computed in Corollary 4.3
of \cite{GM2} as a scalar multiple of the graph half-density. It
is a constant multiple of the graph half-density \begin{equation}
\label{GRAPH} \sigma_{can, \pm}=|dt \wedge dy \wedge d
\eta|^\half\end{equation} in the obvious graph
 parametrization of $\Gamma^j_\pm$ in (\ref{Gammaj}); the constant
  equals $\half$ in the Neumann case and $\frac{(-1)^j}{2}$ in
   the Dirichlet case. However in \cite{GM2} the symbols are not calculated at the boundary.

\subsubsection{Symbol of $S_B(t,x,y)$ at the boundary} Since we want to restrict kernels and symbols to the
boundary, we introduce further notation for the subset of the
canonical relations lying over boundary points. Following
\cite{Ch2}, we denote by $A_{\pm}^0 = \{(0, \tau, y, \eta, y,
\eta): \tau = \pm |\eta|_y\}$ the subset of $\Gamma_{\pm}^0$ with
$t = 0$. Under the flow $\psi_{\pm}^t$  of the Hamiltonian $\tau \pm |\xi|_x$
on $\R \times \R^n$, it flows out to the graph $\Gamma^0_{\pm}$
 (denoted $C^0_{\pm}$ in \cite{Ch2}, (2.11)). One then defines $A_{\pm}^1 \subset
\Gamma_{\pm}^0$ resp. $A_{\pm}^{-1} \subset \Gamma_{\pm}^0$ as the
subset lying over $\R_+ \times \partial \Omega \times \Omega$
resp.  $\R_- \times \partial \Omega \times \Omega$. We then have
$$ \Gamma_{\pm}^1 = \bigcup_{t \in \R} \psi_{\pm}^t
\widehat{A}^1_{\pm}, $$and
$$\Gamma_{\pm}^{-1} = \bigcup_{t \in \R} \psi_{\pm}^t
\widehat{A}^{-1}_{\pm}, $$ as the flow out under the Euclidean
space-time geodesic flow of $\widehat{A}^1_{\pm}$ and $\widehat{A}^{-1}_{\pm}$. Thus, along the boundary, for $t>0$ (resp. $t<0$) $A^1_{\pm}$ and
$\widehat{A}^1_{\pm}$ (resp. $A^{-1}_{\pm}$ and
$\widehat{A}^{-1}_{\pm}$) both lie in the canonical relation of
$E_B(t), S_B(t)$. In a similar way one defines $A^2_{\pm}$ to be
the subset of $\Gamma^1_{\pm}$ lying over $\R_+ \times \partial
\Omega \times \Omega$ and $\widehat{A}^2_{\pm}$ to be its
reflection. Then also $A^2_{\pm} \cup \widehat{A}^2_{\pm}$ lies in
the canonical relation. Similarly one defines $A_{\pm}^j$
and $\widehat{A}_{\pm}^j$ for all $j \in \mathbb Z$.

\begin{rem}Since we are interested in the singularity of the trace at $t=T>0$ we will only consider the graphs $\Gamma^j_\pm$ for $j \geq 0$. Regardless of this, because $\delta \; Tr E_B(t)$ is even in $t$ it has the same singularity at in $t=L$ and $t=-T$.
\end{rem}

The symbols of $E_B(t)$ and $S_B(t)$ are half-densities on the
associated canonical relations, and therefore are sums of four
terms at boundary points, i.e. there is a contribution from each
of $A_{\pm}^j$ and $\widehat{A}_{\pm}^j$. In the interior, there
is only a contribution from the $\pm$ components.

The following Lemma gives formulas for the principal symbol of $S_B$ (and therefore $E_B$) on $\Gamma^j_{\pm}$ and its restriction to $\Gamma_{\partial \Omega} \circ (A^j_\pm  \cup \widehat{A^j_\pm})$.

\begin{lem}\label{BOUNDARYSYMBOL} Let $e_{\pm}$ be the principal symbol of $\tilde{S}_B$
when restricted to $ \Gamma_\pm=\bigcup_{j} \Gamma^j_\pm$. Let $\sigma_r$ be the principal symbol of the boundary restriction operator $r$.
Then\begin{itemize}
\item[1.] In the interior, on $\Gamma_\pm^j$, up to Maslov factors we have  $e_\pm=\frac{(-1)^j}{2\tau}\sigma_{can,\pm}=\pm\frac{(-1)^j}{2|\eta|}
 \sigma_{can, \pm}$ in the Dirichlet case, and $e_\pm=\frac{1}{2\tau}\sigma_{can,\pm}=\pm\frac{1}{2|\eta|}
 \sigma_{can, \pm}$ in the Neumann case.
\item[2.] At the boundary, on $\Gamma_{\partial \Omega} \circ A^j_\pm =\Gamma_{\partial \Omega} \circ \widehat{A^j_\pm}$ we have

\noindent In the Dirichlet case:
$$\sigma_r \circ
e_{\pm}(t_{\pm}^j, \pm \tau, \widehat{\lambda_{\pm}^j(y, \eta)},
y, \eta) = -  \sigma_r \circ e_{\pm}(t^j_{\pm}, \pm \tau,
\lambda_{\pm}^j(y, \eta), y, \eta), $$
\noindent In the Neumann case:
$$ \sigma_r \circ
e_{\pm}(t_{\pm}^j, \pm \tau, \widehat{\lambda_{\pm}^j(y, \eta)},
y, \eta) = \sigma_r \circ e_{\pm}(t^j_{\pm}, \pm \tau,
\lambda_{\pm}^j(y, \eta), y, \eta).$$

\end{itemize}
\end{lem}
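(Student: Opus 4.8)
The plan is to compute the principal symbol of $\tilde S_B$ on each branch $\Gamma^j_\pm$ by induction on $j$, tracking how the symbol transforms under reflection at the boundary, and then to restrict to the boundary using the composition with $\sigma_r$. For part (1), I would start from the known interior symbol of $E_B(t)$ away from the boundary: on $\Gamma^0_\pm$ the half-density symbol is, up to a Maslov factor, the constant $\half$ (Neumann) or $\half$ (Dirichlet) times the canonical graph half-density $\sigma_{can,\pm}=|dt\wedge dy\wedge d\eta|^\half$, as recorded in (\ref{GRAPH}) following Corollary 4.3 of \cite{GM2}; dividing by the factor $\frac{1}{2i\sqrt{-\Delta_B}}$ relating $E_B$ to $S_B$ produces the $\frac{1}{2\tau}=\pm\frac{1}{2|\eta|}$ prefactor. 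To get from $\Gamma^j_\pm$ to $\Gamma^{j+1}_\pm$ one composes with the local reflection parametrix of Chazarain \cite{Ch2} (Theorem 3.1): near a transversal reflecting point the reflected wave is obtained, as in the method-of-images model of $\R^n_+$, by the operator of reflection $\widehat{\ \cdot\ }$ composed with a zeroth-order term whose symbol is $+1$ in the Neumann case and $-1$ in the Dirichlet case (this is exactly the sign in $S_D=S_0-S_0^*$ versus $S_N=S_0+S_0^*$). Each reflection therefore multiplies the symbol by $(\pm1)$, which accounts for the factor $(-1)^j$ in the Dirichlet case and $1$ in the Neumann case, while the canonical-graph half-density is carried along by the flow since $\psi^t_\pm$ is a symplectomorphism preserving $|dt\wedge dy\wedge d\eta|^\half$. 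The Maslov contributions are the same on $\lambda^j_\pm$ and $\widehat{\lambda^j_\pm}$ and are absorbed into the stated "up to Maslov factors."

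For part (2), the key point is that $\Gamma^j_\pm$ has two "sides": the piece over $A^j_\pm$ carrying the incoming covector $\lambda^j_\pm(y,\eta)$ and the piece over $\widehat A^j_\pm$ carrying the reflected covector $\widehat{\lambda^j_\pm(y,\eta)}$, and after composition with the restriction operator $r$ these two pieces map to the \emph{same} Lagrangian $\Gamma_{\partial\Omega}\circ A^j_\pm = \Gamma_{\partial\Omega}\circ\widehat{A^j_\pm}$ in $T^*\partial\Omega\times T^*(\R\times\Omega)$, because $\lambda$ and $\widehat\lambda$ have the same tangential projection. So one must add the two contributions. The point of the lemma is to record their \emph{relative} sign before adding. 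I would argue this directly from the reflection structure: the boundary condition forces $S_D$ (resp. $S_N$) to vanish (resp. to have vanishing normal derivative) on $\partial\Omega$, so $\rr S_D = \rr S_0 - \rr S_0^*$ with the two terms restricting to \emph{equal} values on the boundary, giving $\rr S_D|_{\partial\Omega}=0$; differentiating the corresponding identity tangentially (which is what enters the boundary trace $S_D^b$) shows $\sigma_r\circ e_\pm$ on the $\widehat\lambda$-branch equals $-$ the value on the $\lambda$-branch in the Dirichlet case. In the Neumann case $\rr S_N = \rr S_0 + \rr S_0^*$ and the two boundary contributions are equal with a $+$ sign. Formally this is the statement that $\rr\circ\widehat{(\cdot)} = \mp\,\rr$ applied to the interior parametrix, where the sign is $-$ for Dirichlet and $+$ for Neumann; composing with the symbol $e_\pm$ and using that $\sigma_r$ is insensitive to the normal component (its principal symbol is $|dq\wedge d\zeta\wedge d\xi_n|^\half$, depending on $\xi$ only through its tangential part $\zeta$) gives precisely the two displayed formulas.

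The main obstacle I expect is the bookkeeping at the boundary: carefully identifying the Lagrangian $\Gamma_{\partial\Omega}\circ A^j_\pm$ with $\Gamma_{\partial\Omega}\circ\widehat{A^j_\pm}$, checking that the Gelfand--Leray / canonical half-densities on the two pieces genuinely push forward to the \emph{same} half-density on the common image (so that the comparison of symbols is meaningful as an equality of half-densities, not just of scalar coefficients), and confirming that the Maslov factors agree on the two branches so that they do not spoil the bare sign $\pm1$. The transversality of the reflecting ray is used here to guarantee that the reflection map is a local diffeomorphism near the relevant point and that all the compositions are clean, so that the symbol calculus of \cite{Ho} (Vol. IV) applies without correction terms. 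Once these identifications are pinned down, the sign is forced by the method of images and the argument is essentially the model computation on $\R^n_+$ transported by Chazarain's parametrix.
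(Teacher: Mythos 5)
Your proposal is correct and takes essentially the same route as the paper: part (1) follows from the interior symbol transported along the broken flow, and part (2) from the fact that the boundary condition forces the (appropriately weighted) sum of the direct and reflected boundary symbols to vanish --- the paper packages this as Chazarain's transport equations $(b_0')$--$(e_0')$, while you package it as the method of images and the reflection parametrix, which is the same mechanism. (One immaterial slip: the ``tangential differentiation'' you invoke in the Dirichlet discussion plays no role, since the lemma concerns only $\sigma_r\circ e_\pm$ and the Dirichlet boundary trace in any case uses normal derivatives.)
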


\begin{proof} These formulas are obtained from the transport
equations in \cite{Ch2}, $(b_0') - (e_0') $ (page 175). We now sketch the proof.

The transport equations for the symbols of $E_B, S_B$ determine how they
propagate along broken geodesics.  As in the boundaryless case,
the principal symbol has a zero Lie derivative, $\lcal_{H_{\tau+|\xi|}}
\sigma_E = 0$,  in the interior along geodesics. The important
point for us is the rule by which they are reflected at the
boundary. Let $\sigma_B$ be the principal symbol of the boundary restriction operator $B$ defined in (\ref{EIG}) ($B=r$ resp. $B=rN$ when we have Dirichlet resp. Neumann boundary condition) and let $\sigma_0$ be the principal symbol of the restriction operator to $t=0$. Then,
\begin{equation} \label{TRANSPORT} \left\{ \begin{array}{llll}
(b_0): & (\frac{d^2}{dt^2}-\Delta_B) \tilde S_B \sim  0  \Longrightarrow    (b_0'): \; \lcal_{\psi^t_\pm} e_\pm =0\\ & \\
(c_0): & \tilde{S_B}|_{t=0} \sim 0 \Longrightarrow (c_0'):\; \sigma_0 \circ e_+(0, \tau, y, \eta, y, \eta)+\sigma_0\circ e_-(0, -\tau, y, \eta, y, \eta)=0\\ & \\
(d_0): & \frac{d}{dt}|_{t=0} \tilde{S_B} \sim \delta(x-y) \Longrightarrow (d_0'): \; \tau \big( \sigma_0 \circ e_+(0, \tau, y, \eta, y, \eta)-\sigma_0\circ e_-(0, -\tau, y, \eta, y, \eta)\big)=\sigma_I\\ &\\
(e_0): &  B \tilde S_B \sim 0 \Longrightarrow  (e_0'): \;\; \sigma_B \circ e_\pm=\sigma_B \circ (e_\pm|_{A^j_\pm})+ \sigma_B \circ( e_\pm|_{\widehat{A^j_\pm}})=0.
\end{array} \right. \end{equation}
Here $\sigma_I$ is the principal symbol of the identity operator. The implication  $(b_0) \Longrightarrow (b_0')$ follows for example from Theorem 5.3.1 of \cite{DH}. The other implications are obvious. From $(c_0')$ and $(d_0')$ we get
$$(\sigma_0 \circ e_\pm) (y,\eta, y,\eta)=\frac{1}{2\tau}\sigma_I, \qquad  \text{on} \quad T^*\Omega.$$
But by $(b_0')$, the symbol $e_\pm$ is invariant under the flow
$\psi_\pm^t$ and therefore the first part of the Lemma follows but
only on $\Gamma_\pm^0$. The second part of the Lemma follows from
$(e_0')$.  The first term of $(e_0')$ is known from the previous
transport equations. Hence $(e_0')$ determines the `reflected
symbol' at the $j$-th impact time and impact point. In the
Dirichlet case, $B$ is just $r$ the restriction to the boundary
and so the reflected principal symbol is simply the opposite of
the direct principal symbol. In the Neumann case, $B$ is the
product of the symbol  $\langle \lambda_{\pm}^1(y, \eta), \nu_y
\rangle$ of the inward normal derivative times restriction $r$.
The reflected symbol thus equals the direct symbol since the sign
is canceled by the sign of the $\langle
\widehat{\lambda_{\pm}^1(y, \eta)}, \nu_y \rangle=-\langle
\lambda_{\pm}^1(y, \eta), \nu_y \rangle$ factor. Thus, the volume
half-density is propagated unchanged in the Neumann case and has a
sign change at each impact point in the Dirichlet case. It follows
that, on $\Gamma_\pm^j$ and after $j$ reflections, the Dirichlet
wave group symbol is $(-1)^j$ times $\frac{1}{2\tau}$ times the
graph half-density \eqref{GRAPH} and the Neumann symbol is
$\frac{1}{2\tau}$ times the graph half-density.

\end{proof}

\subsection{ $\chi_{T}(D_t, q', D_{q'}) \chi_{T}( D_t,q, D_{q}) S^b_B(t,q',q) $ is a Fourier integral operator}

\begin{lem} \label{partial} We have, $$\chi_{T}(D_t, q', D_{q'}) \chi_{T}(D_t, q, D_{q})S^b_B(t, q', q) \in I^{\half + 1 - \frac{1}{4}} (\R
\times
\partial \Omega \times \partial \Omega, \Gamma_{\partial,\pm}).
$$
 Here,
$\Gamma_{\partial,\pm} = \bigcup_{j \in \mathbb Z}\Gamma^j_{\partial,\pm}, $ with
$$\begin{array}{lll} \Gamma^j_{\partial,\pm} :&= & \{(t, \tau, q', \zeta', q, \zeta) \in T^* (\R
\times \partial \Omega \times \partial \Omega): \exists \;\xi' \in
T^*_{q'} \R^n, \xi \in T^*_{q} \R^n: \\ &&\\ &&  (t, \tau, q', \xi',
q, \xi) \in \Gamma^j_\pm, \;\; \xi'|_{T_{q'}
\partial \Omega} = \zeta',\; \xi|_{T_{q}
\partial \Omega} = \zeta  \}.
\end{array}$$

\end{lem}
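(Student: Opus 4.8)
The plan is to realize $\chi_{T}(D_t, q', D_{q'})\chi_{T}(D_t, q, D_{q}) S^b_B(t, q', q)$ as a composition of Fourier integral operators whose orders and canonical relations are already known, and then invoke the clean (in fact transversal) composition calculus for FIOs. First I would rewrite $S^b_B$ via \eqref{b}: in the Dirichlet case it is $r^D_{q'} r^D_{q} S_D(t,q',q)$, i.e.\ a composition of the boundary Cauchy-data restriction $r^D = r\circ N_\nu$ in each of the two $\R^n$-variables with the interior wave parametrix $\tilde S_B(t,x,y)$, which by the discussion preceding Lemma \ref{BOUNDARYSYMBOL} lies in $I^{-\frac14-1}(\R\times\R^n\times\Omega,\Gamma_\pm)$ microlocally near the transversal reflecting rays (the pseudodifferential cutoffs $\chi_T$ localize away from the glancing set, so this parametrix description is valid on the support of the kernel in question). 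The Neumann case is handled in the same way using the tangential-gradient and Laplacian expressions in \eqref{b}; applying $\nabla^T$ or $\Delta_x$ does not change the canonical relation, only raising the order, so I will carry out the argument in the Dirichlet case and remark that the Neumann case is identical up to symbol bookkeeping.

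The key composition is $\Gamma_{\partial\Omega}\circ \Gamma^j_\pm \circ \Gamma_{\partial\Omega}^*$, where $\Gamma_{\partial\Omega}$ (and its adjoint) is the restriction canonical relation of \eqref{LAMBDADELTA}. I would check that this is a transversal (hence clean, excess zero) composition: the restriction relation $\Gamma_{\partial\Omega}$ maps $(q,\zeta)\in T^*\partial\Omega$ to the set of $\xi\in T^*_q\R^n$ with $\xi|_{T_q\partial\Omega}=\zeta$, so composing $\Gamma^j_\pm$ on the left and right with restriction simply intersects the incoming and outgoing covectors in $\Gamma^j_\pm$ with $T^*_{\partial\Omega}\R^n$ and projects them tangentially to the boundary; away from glancing this is a clean intersection with the expected dimensions, and the resulting composed relation is exactly the $\Gamma^j_{\partial,\pm}$ described in the statement — the set of $(t,\tau,q',\zeta',q,\zeta)$ arising by restricting to the boundary some $(t,\tau,q',\xi',q,\xi)\in\Gamma^j_\pm$ with both foot points on $\partial\Omega$. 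The order count then is additive under transversal composition: $r$ and $r^*$ each contribute $\frac14$ (as computed after \eqref{PARAMGAM}), $\tilde S_B$ contributes $-\frac14-1$, and the two restrictions/one middle factor combine — with the standard shift for composition of FIOs between manifolds of these dimensions — to give order $\tfrac12+1-\tfrac14$ as claimed. The cutoffs $\chi_T$ are order $0$ and do not affect the order; they only restrict the canonical relation to a neighborhood of $F_T$ and ensure all of the above compositions take place in the transversal-reflecting regime.

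The main obstacle, and the step I would spend the most care on, is justifying that the composition is genuinely transversal (not just clean with positive excess) uniformly on the relevant piece of $\Gamma^j_\pm$, and that the microlocal parametrix $\tilde S_B$ from \cite{Ch2} really does capture $S_B(t,q',q)$ up to a smoothing kernel after the boundary restriction and the $\chi_T$ cutoffs — i.e.\ that no contributions from the glancing set or from the smooth error term in Chazarain's construction leak into the principal part near $t=T$. For the transversality check one uses that, away from glancing, the differential of the tangential-projection map on each branch $A^j_\pm$, $\widehat{A^j_\pm}$ is surjective onto $T^*\partial\Omega$, which follows from the non-tangency of the reflecting rays; I would phrase this using the Fermi-coordinate description \eqref{OPAH} of $r$ together with \eqref{Gammaj}. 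Once transversality is in hand, the order and the identification of $\Gamma_{\partial,\pm}$ follow from Hörmander's composition theorem (\cite{Ho}, Vol.\ IV), and the lemma is proved; the explicit principal symbol computed from Lemma \ref{BOUNDARYSYMBOL} is then the input to the next step.
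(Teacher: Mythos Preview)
Your approach is essentially the paper's: realize $S^b_B$ as the composition $\chi_T\,(rN)\,S_B\,(N^*r^*)\,\chi_T^*$, invoke the Chazarain parametrix for $S_B$ away from glancing (which $\chi_T$ enforces), and use transversal FIO composition to read off the canonical relation and order. One correction to your order count: in H\"ormander's calculus orders simply add under transversal composition---there is no ``standard shift between manifolds of different dimensions''---so the $+2$ you are missing comes from the two normal derivatives $N_\nu$ (each of order $1$) that you silently dropped when passing from $r^D=rN_\nu$ to $r$; with $r^D\in I^{5/4}$ one gets $\tfrac54-\tfrac54+\tfrac54=\tfrac54$ directly. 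The paper also streamlines your transversality check by observing that each $\Gamma^j_\pm$ is the graph of a canonical transformation, so its composition with $\Gamma_{\partial\Omega}$ (away from the fold, which $\chi_T$ excises) is automatically transversal.
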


\begin{proof} We only show the proof in the Dirichlet case. The Neumann case is very similar. The kernel $\chi_{T}(D_t, q', D_{q'}) \chi_{T}(D_t, q, D_{q})S^b_D(t, q', q)$ for fixed $t$ is the Schwartz kernel of
the composition
\begin{equation} \label{COMP} \chi_{T} \circ (\rr\; N) \circ
S_D(t)\circ (N^*\; \rr^*)\circ \chi_{T}^* : L^2 (\partial
\Omega)\to L^2 (\partial \Omega),
\end{equation} where $\rr^*$ is the adjoint of
$\rr: H^{\half} (\bar{\Omega}) \to L^2(
\partial \Omega)$.

To prove the Lemma, we use that   $\rr$ is a Fourier integral
operator with a folding canonical relation, and that the
composition (\ref{COMP}) is transversal away from the tangential
directions to $\partial \Omega$, where $S_B(t)$ fails to be a
Fourier integral operator. The cutoff $\chi_T$ removes the part of
the canonical relation near the fold locus, hence the composition
is a standard Fourier integral operator.

By the results cited above in  \cite{Ch2,GM2,PS,MM,Ch2},
microlocally away from the gliding directions, the wave operator $
S_B(t) $ is a Fourier integral operator associated to the
canonical relations $\Gamma^j_{\pm}$.  Since $\Gamma^j_{\pm}$ is a
union of  graphs of  canonical transformations, its composition
with the canonical relation of $\rr^D=rN$ is automatically
transversal. The further composition with the canonical relation
of $\rr^{D *}$ is also transversal. Hence, the composition is a
Fourier integral operator with the composed wave front relation
and the orders add. Taking into account that we have two boundary
derivatives, we need to add $\half$ to the order.

To determine the composite relation, we note that
\begin{equation} \label{PHI}
\begin{array}{l}\Phi_\pm: \R \times T^*_{\partial \Omega} \R^n \to T^* \R \times  T^*
\Omega \times T^*_{\partial \Omega} \Omega, \\ \\ \Phi_\pm(t, q, \zeta, \xi_n) := (t, \pm |\zeta
+ \xi_n|, \Phi^t(q, \zeta, \xi_n), q, \zeta, \xi_n) \end{array}
\end{equation} parameterizes the graph of the (space-time) billiard
flow with initial condition on $T^*_{\partial \Omega} \mathbb R^n$. Here,
$\zeta \in T^* \partial \Omega$ and $\xi_n \in N_+^* \partial
\Omega$, the inward pointing (co-)normal bundle.  $\Phi_\pm $ is a
homogeneous folding map with folds along $\R \times T^*
\partial \Omega$ (see e.g. \cite{Ho} (volume III) for background).
It follows that  $S_D(t) \circ (N^* r^*) \chi_T^*$  is a Fourier integral operator of order one associated to the canonical relation
$$\{(t, \pm |\xi|, \Phi^t(q, \xi), q, \xi|_{T^* \partial\Omega}\} \subset T^*(\R
\times \Omega \times \partial\Omega),$$ and is a local
canonical graph away from the fold singularity along $T^* \partial
\Omega$. Composing on the left by the restriction
relation  produces a Fourier integral operator with  the stated
canonical relation. The two normal derivatives $N$ of course do not change the
relation.

\end{proof}

\subsection{Symbol of  $\chi_{T}(D_t,q', D_{q'}) \chi_{T}(D_t,  q, D_q) S^b_B(t, q', q) $ }

The next step is to compute the principal symbols of the operators
in Lemma \ref{partial}.

To state the result, we need some further notation. We denote
points of $T^*_{\partial \Omega} \R^n$ by $(q, 0, \zeta, \xi_n)$ as
above, and put $\tau = \sqrt{|\zeta|^2 + \xi_n^2}$. We note that
$\xi_n$ is determined by $(q, \zeta, \tau)$ by $\xi_n =
\sqrt{\tau^2 - |\zeta|^2}$, since it is inward pointing.  The
coordinates $q, \zeta$ are symplectic, so the symplectic form on
$T^* \partial \Omega$ is $d \sigma = dq \wedge d \zeta. $ We then
relate the graph of the  billiard flow (\ref{PHI}) with initial
and terminal point  on the boundary to the billiard map (after $j$
reflections) by the formula
\begin{equation} \label{PHIBETA} \Phi^{T_j}(q, 0, \zeta, \xi_n) = (\tau \beta^j(q, \frac{\zeta}{\tau}), \xi_n'(q, \zeta, \xi_n)), \end{equation}
where $\xi_n' = \tau \sqrt{1 - |\beta^j(q, \frac{\zeta}{\tau})|^2}.
$  We also put \begin{equation} \label{gamma} \gamma(q, \zeta,
\tau) = \sqrt{1 - \frac{|\zeta|^2}{\tau^2}}, \;\; \text{and}\;\;  \gamma_1(q, \zeta) = \sqrt{1 - |\zeta|^2}.
\end{equation} It is the homogeneous (of degree zero) analogue of the function denoted by
$\gamma$ in \cite{HZ}.

Further,  we parameterize the canonical relation
$\Gamma^j_{\partial,+}$ of Lemma \ref{partial} using the billiard map
$\beta$ and its powers. We define the $j$th return time $T^j(q,
\xi)$ of the billiard trajectory in a codirection $(q, \xi) \in
T^*_q \Omega$ to be the length the $j$-link billiard trajectory
starting at $(q, \xi)$ and ending at a point $\Phi^{T^j(q, \xi)}
(q, \xi) \in T^*_{\partial \Omega} \Omega $. It is the same as $T^j_+(q, \xi)$.  Then we define
\begin{equation} \label{PARAMb} \iota_{\partial, j,+}:
 \R_+ \times T^*
\partial \Omega  \to T^*(\R \times \partial \Omega
\times
\partial \Omega), \;\;\;\iota_{ \partial, j}(\tau, q, \zeta) = (T^j(q, \xi(q,\zeta, \tau)),
\tau, ( \tau \beta^j (q, \frac{\zeta}{\tau})), q, \zeta),
\end{equation} where
$$\xi(q, \zeta, \tau)= \zeta + \xi_n \nu_q, \;\; \;\; |\zeta|^2 + |\xi_n |^2 = \tau^2. $$ The map (\ref{PARAMb}) parameterizes
$\Gamma^j_{\partial, +}$ of Lemma \ref{partial}.

\begin{prop} \label{PROP} In the coordinates $(\tau, q, \zeta) \in \R_+ \times T^* \partial \Omega$ of \eqref{PARAMb},
 the principal symbol of $$\chi_{T}(D_t,q', D_{q'}) \chi_{T}(D_t, q,
D_{q}) S^b_B(t, q', q)$$ on $\Gamma^j_{\partial, +}$
is as follows:
\begin{itemize}
\item in the Dirichlet case:
$$\sigma_{j,+}(q, \zeta, \tau) = C_{j,+}^D \chi_T(q, \frac{\zeta}{\tau}) \chi_T(\beta^j (q, \frac{\zeta}{\tau})) \gamma^{\half}(q, \zeta, \tau)
\gamma^{\half}(\tau \beta^j (q,\frac{\zeta}{\tau}), \tau))\tau |dq \wedge d
\zeta \wedge d \tau|^{\half},
$$
\item in the Neumann case:
$$\sigma_{j,+}(q, \zeta, \tau) = C_{j,+}^N \chi_T(q, \frac{\zeta}{\tau}) \chi_T(\beta^j (q, \frac{\zeta}{\tau})) \gamma^{-\half}(q, \zeta, \tau)
\gamma^{-\half}(\tau \beta^j (q,\frac{\zeta}{\tau}), \tau))\big(\langle \zeta, \beta^j(q,\frac{\zeta}{\tau})\rangle -\tau \big) |dq \wedge d
\zeta \wedge d \tau|^{\half},
$$where $C_{j,+}^B$ are certain constants (Maslov factors).
\end{itemize}

\end{prop}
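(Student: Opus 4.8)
The strategy is to assemble the symbol of the composition $\chi_T r N \circ S_B(t) \circ N^* r^* \chi_T^*$ by the symbol calculus for clean (here transversal) compositions of Fourier integral operators, using the symbol data already in hand: the symbol $\sigma_r = |dq\wedge d\zeta\wedge d\xi_n|^{1/2}$ of the restriction operator from \S\ref{restriction operator r as FIO (the one following LAMBDADELTA)}, and the symbol of $S_B(t)$ on $\Gamma^j_\pm$ from Lemma \ref{BOUNDARYSYMBOL}. First I would reduce to a single branch $\Gamma^j_{\partial,+}$ and a single reflection number $j$, since the cutoff $\chi_T$ localizes near $F_T$ and the sum over $j$ is locally finite. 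On that branch the composition is transversal (this is exactly what Lemma \ref{partial} established, the folding locus of $\Phi_\pm$ having been excised by $\chi_T$), so the principal symbol of the composite is the pointwise product of the three principal symbols pushed forward along the composition, multiplied by the density factor coming from the transversal intersection of the Lagrangians and by a Maslov factor — these I would lump into the constants $C_{j,+}^B$, as the statement permits.

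The bookkeeping then proceeds in three layers. \emph{Layer one:} start from $e_+ = \pm\frac{(-1)^j}{2|\eta|}\sigma_{can,+}$ (Dirichlet) resp. $\pm\frac{1}{2|\eta|}\sigma_{can,+}$ (Neumann) on $\Gamma^j_+$, with $\sigma_{can,+} = |dt\wedge dy\wedge d\eta|^{1/2}$ in the graph parametrization, and change variables from the interior parametrization $(t,y,\eta)$ to the boundary parametrization $(\tau,q,\zeta)$ of \eqref{PARAMb}. Here $\eta$ gets traded for $(q,\zeta,\xi_n)$ with $\xi_n = \sqrt{\tau^2-|\zeta|^2}$, producing the Jacobian factor that becomes a power of $\gamma(q,\zeta,\tau) = \sqrt{1-|\zeta|^2/\tau^2}$; the point is that $d\eta = d\zeta\,d\xi_n$ and $d\xi_n/d\tau = \tau/\xi_n = \gamma^{-1}$, so a half-density picks up $\gamma^{-1/2}$ at the \emph{initial} boundary point and another $\gamma^{-1/2}$ at the \emph{terminal} boundary point $\tau\beta^j(q,\zeta/\tau)$. \emph{Layer two:} compose with $\sigma_r$ on each side. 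In the Dirichlet case $\rr^D = rN$, and the normal-derivative factor $N$ contributes the symbol $\langle\xi,\nu\rangle = \xi_n = \tau\gamma$ at the initial point and $\tau\gamma(\tau\beta^j(q,\zeta/\tau),\tau)$ at the terminal point; combined with the $\gamma^{-1/2}$ factors from Layer one this yields the net $\gamma^{+1/2}\gamma^{+1/2}\cdot\tau$ displayed in the Dirichlet formula (after tracking the $1/|\eta| = 1/\tau$ from $e_+$, which cancels one power of $\tau$). In the Neumann case $\rr^D$ is replaced by $\nabla^T_1\nabla^T_2 rr - rr\Delta_x$ acting on $S_N$; on the relevant Lagrangian the tangential-gradient symbols contribute $\langle\zeta',\zeta\rangle$ and the $\Delta_x$ term contributes $|\xi|^2 = \tau^2$, so the combined factor is $\langle\zeta,\beta^j(q,\zeta/\tau)\rangle - \tau$ (after extracting $\tau$ from $|\xi|^2/\tau$), and there is no $N$ factor, so the $\gamma^{-1/2}$ pair survives to give the $\gamma^{-1/2}\gamma^{-1/2}$ of the Neumann formula. \emph{Layer three:} insert the cutoff symbols $\chi_T(q,\zeta/\tau)$ and $\chi_T(\beta^j(q,\zeta/\tau))$, which multiply in because $\chi_T$ is a pseudodifferential operator with symbol $\chi_T(q,\zeta/\tau)$ acting at the two ends of $\Gamma^j_{\partial,+}$, and the cutoff is $\beta$-invariant so it respects the reflections. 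Collecting all three layers and reading off the half-density in the coordinates $(\tau,q,\zeta)$ gives $|dq\wedge d\zeta\wedge d\tau|^{1/2}$ with the stated prefactors.

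The main obstacle is Layer two in the Neumann case: untangling exactly which products of tangential and normal covector components the operator $\nabla^T_1\nabla^T_2 \rr_1\rr_2 - \rr_1\rr_2\Delta_x$ contributes when applied to the two-sided kernel $S_N(t,q',q)$, and verifying that the reflection signs from Lemma \ref{BOUNDARYSYMBOL} combine with the $A^j_\pm$ versus $\widehat{A^j_\pm}$ contributions (which are equal in the Neumann case) so that the net symbol is $\langle\zeta,\beta^j(q,\zeta/\tau)\rangle-\tau$ rather than some other combination — in particular one must check that, restricted to $\Gamma^j_{\partial,+}$, the full covector $\xi$ at the terminal point really projects tangentially to $\zeta$-times-the-direction of $\beta^j$, so that $\langle\nabla^T\cdot\rangle$ evaluates to $\langle\zeta,\beta^j(q,\zeta/\tau)\rangle$ after homogenizing. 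The Dirichlet case is more routine because $rN$ simply contributes the scalar $\xi_n$ at each end, and the sign from $N$ at the reflected point cancels the Dirichlet sign from Lemma \ref{BOUNDARYSYMBOL}(2), which is already anticipated in the remark there that the half-density is "propagated unchanged in the Neumann case and has a sign change at each impact point in the Dirichlet case." All such signs and the transversal-composition density factors are absorbed into $C_{j,+}^B$, so they need only be shown nonzero, not computed exactly.
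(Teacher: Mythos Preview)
Your proposal is essentially correct and follows the same strategy as the paper: assemble the symbol of the composition $\chi_T\, rN \circ S_B(t)\circ N^*r^*\,\chi_T^*$ by transversal symbol calculus, starting from the graph half-density $|dt\wedge dy\wedge d\eta|^{1/2}$ (times $1/2\tau$) and tracking the factors contributed by the normal derivatives and the two boundary restrictions. The paper carries out your ``Layer one'' more formally, replacing the heuristic Jacobian $d\xi_n/d\tau=\gamma^{-1}$ by the pullback computation $(i_{\partial\Omega}\times i_{\partial\Omega})^*|dt\wedge dx\wedge d\xi|^{1/2}=\gamma^{-1/2}\gamma^{-1/2}|dq\wedge d\zeta\wedge d\tau|^{1/2}$ via a fiber-product diagram and two short lemmas (one relating $|dt\wedge dq\wedge d\zeta\wedge d\xi_n|$ to $\Phi^*\omega_{T^*\R^n}$, the other dividing out $|dx_n'|$ on the locus $x_n'=0$), but the content and the outcome are the same as what you describe.
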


\begin{proof}

We only show the computations in the Dirichlet case. The Neumann
case is very similar and uses (\ref{b}) which will produce an
additional factor of $\tau \langle\zeta,
\beta^j(q,\frac{\zeta}{\tau})\rangle-\tau^2$.

By Lemma \ref{BOUNDARYSYMBOL}, the principal symbol of $S_B(t)$
consists of four pieces at the boundary, one for each mode
$A^j_{\pm}, \widehat{A_{\pm}^j}$. The symbol for the $-$ mode of
propagation is equal to that for the  $+$ mode of propagation
 under the time reversal map $\xi \to -
\xi$. Further by part 2 of Lemma \ref{BOUNDARYSYMBOL}, the symbol at the boundary (adjusted by taking
normal derivatives in the Dirichlet case) is invariant under the
reflection map $\xi \to \hat{\xi}$ at the boundary due to the
boundary conditions. Hence we only calculate the
$A_+^j$ component and use the invariance properties to
calculate the symbol on the other components.

We therefore assume that the symbol of $S_B$ is $\frac{1}{2\tau}$ times the graph
half-density $|dt \wedge d x \wedge d \xi|^{\half}$ on
$\Gamma^j_+$.
 We need to compose this graph half-density on the left  by the symbol $\xi_n |dq
\wedge d \zeta \wedge d \xi_n|^{\half}$ of $r^D=r\; N$, and on the
right by the symbol $\xi'_n |dq' \wedge d \zeta' \wedge d
\xi'_n|^{\half}$ of the adjoint $r^{D*}=N^* r^*$. Therefore we compute the restriction of the $\Gamma^j_+$  component onto $\Gamma^j_{\partial, +}$ and we remember to multiply the symbol by $\xi_n \xi_n'= \tau^2 \gamma(q, \zeta, \tau)
\gamma(\tau \beta^j (q,\frac{\zeta}{\tau}), \tau))$ and also by $\frac{1}{2\tau}$ at the end.

It is simplest to use symbol algebra and pullback formulae to calculate it (see
\cite{DG}). The  composition is equivalent to the pullback of the symbol under
the pullback
\begin{equation} \label{PULLBACK} \Gamma^j_{\partial} =
(i_{\partial \Omega} \times i_{\partial \Omega})^* \Gamma^j,
\end{equation} of the canonical relation of the $S_B$ by the canonical inclusion map $$i_{\partial \Omega}
\times i_{\partial \Omega}: \R \times \partial \Omega \times
\partial \Omega \to \R \times \R^n \times \Omega.$$ We recall that
a map $f: X \to Y$ is transversal to $W \subset T^* Y$ if $df^*
\eta \not= 0$ for any $\eta \in W$. If $f: X \to Y$ is smooth and
$\Gamma \subset T^* Y$ is Lagrangian, and if $f$ and $ \pi: T^* Y
\to Y$ are transverse then $f^* \Gamma$ is Lagrangian. Since
$$(i_{\partial \Omega} \times
i_{\partial \Omega})^* (t, \tau, \Phi^t(q, \xi), q, \xi) = (t,
\tau, \Phi^t(q, \xi)|_{T \partial
\Omega}, q, \xi|_{T \partial \Omega} ) $$ at a point over  $(i_{\partial \Omega} \times
i_{\partial \Omega}) (t, q', q)$, and since $\tau = |\xi| \not=
0$, it is clear that $i_{\partial \Omega} \times i_{\partial
\Omega}$ is transversal to $\pi$.

We now claim that on the pullback of $\Gamma^j$, using the
parametrization (\ref{PARAMb}),
\begin{equation}\label{QUO} (i_{\partial \Omega} \times i_{\partial \Omega})^* |dt \wedge d x \wedge d
\xi|^{\half} = \gamma^{- \half}(q, \zeta, \tau)
\gamma^{-\half}(\tau\beta^j(q, \frac{\zeta}{\tau}), \tau) |dq \wedge d \zeta \wedge d
\tau|^{\half},
\end{equation} where $\gamma$ is defined in (\ref{gamma}). To see this, we use the pullback diagram
$$\begin{array}{cccc}
\Gamma^j  &\overset{\pi}{\longleftarrow}& F & \overset{\alpha}{\xrightarrow{\hspace*{1.2cm}}}
(i_{\partial \Omega} \times i_{\partial \Omega})^* \Gamma^j
\subset T^*(\R \times \partial \Omega \times \partial \Omega)
\\ &&\\ i \downarrow &  & \pi \downarrow &
\\ && \\
T^*(\R \times \R^n \times \Omega)  & \overset{\pi}{\longleftarrow} & {\mathcal
N}^*(\mbox{graph}(i_{\partial \Omega} \times i_{\partial \Omega}
)) &
\end{array}$$
Here, $F$ is the fiber product, ${\mathcal
N}^*\mbox{graph}(i_{\partial \Omega} \times i_{\partial \Omega})$
is the co-normal bundle to the graph, and the map $\alpha: F \to
(i_{\partial \Omega} \times i_{\partial \Omega})^* \Gamma^j$ is the
natural projection to the composition (see \cite{DG}). Since the
composition is transversal, $D \alpha$ is an isomorphism (loc.
cit.). The graph of $i_{\partial \Omega} \times i_{\partial
\Omega}$ is the set $\{(t, q,  q',
 t, q,  q' ): (t, q, q') \in \R \times \partial \Omega \times \partial \Omega\}$ and its
conormal bundle is (in the Fermi normal coordinates),
$$\begin{array}{lll} {\mathcal N}^*(\mbox{graph}(i_{\partial \Omega} \times i_{\partial
\Omega})) &  = & \{(t, \tau , q, \zeta, q', \zeta',  t, - \tau, q, -
\zeta + \xi_n, q', - \zeta' + \xi_n'), \;\;\\ && \\ && (q, \zeta,
\xi_n), (q', \zeta', \xi_n') \in T^*_{\partial \Omega}\R^n\}
\\ && \\
& \subset & T^*(\R \times \partial \Omega \times
\partial \Omega \times \R \times \mathbb R^n \times \mathbb R^n),\end{array}
$$ The half density produced by the pullback diagram
takes the exterior tensor product of the canonical half density
$$|dt \wedge d \tau \wedge dq \wedge d \zeta \wedge d \xi_n \wedge
d \xi_n' \wedge dq' \wedge d \zeta' |^{\half}$$ on $ {\mathcal
N}^*(\mbox{graph}(i_{\partial \Omega} \times i_{\partial \Omega}))
$  and $$|dt' \wedge dx' \wedge d \xi'|^{\half}, \;\;\mbox{ on}\;
\Gamma^j \subset T^*(\R \times \mathbb R^n \times \Omega)$$
  at a point of the fiber product (where the
  $T^*(\R \times \mathbb R^n \times
\Omega)$ components are equal) and divides by the canonical half
density
$$|dt' \wedge d\tau' \wedge dq' \wedge d \zeta' \wedge d x_n' \wedge
d \xi_n' \wedge dx' \wedge d \xi' |^{\half}$$ on the common $T^*
\R \times T^* \R^n \times T^* \Omega$ component.

Since $\tau' = \tau$, the  factors of $|dt' \wedge d \tau' \wedge
dq' \wedge d \zeta' \wedge d \xi_n' \wedge dx' \wedge d \xi'
|^{\half}$ cancel in the quotient half-density, leaving the half
density
$$\frac{|dt  \wedge  d q \wedge d \zeta \wedge d \xi_n |^{\half}}{| d x_n'|^{\half}} $$
on the composite. The numerator is a half-density on   $\R \times
T^*_{\partial \Omega} \R^n$. We write it  more intrinsically in
the following Lemma. Note that it explains the first of our two
$\gamma$ factors.

\begin{lem} \label{QUOTCALC} Let $\Phi=\Phi_+$ be the parametrization (\ref{PHI}) and $\omega_{T^*\mathbb R^n}$ be the canonical symplectic form of $T^*\mathbb R^n$. Then $|dt \wedge d q \wedge d
  \zeta \wedge d \xi_n|^{\half} = |\frac{\xi_n}{\sqrt{|\zeta|^2  + \xi_n^2}}|^{-\half} \left|  \Phi^{*}
  \omega_{T^* \R^n} \right|^{\half} $ as half-densities on $\R \times T^*_{\partial \Omega} \R^n$. \end{lem}

  \begin{proof} We have,
  $$\begin{array}{lll} \frac{\Phi^{*} \omega_{T^* \R^n}}{dt \wedge d q \wedge d
  \zeta\wedge d \xi_n} & = & \omega_{T^* \R^n} (\frac{d}{dt} \Phi^t(q,
  \zeta, \xi_n), d \Phi^t \frac{\partial}{\partial q_j},  d \Phi^t \frac{\partial}{\partial
  \zeta_j},  d \Phi^t \frac{\partial}{\partial \xi_n} ) \\ && \\
  & = & \omega_{T^* \R^n} (H_g,  \frac{\partial}{\partial q_j},  \frac{\partial}{\partial
  \zeta_j},   \frac{\partial}{\partial \xi_n} )  \\ && \\
  & = &  \frac{\xi_n}{\sqrt{|\zeta|^2  + \xi_n^2}} \omega_{T^* \R^n} (\frac{\partial}{\partial x_n},  \frac{\partial}{\partial q_j},  \frac{\partial}{\partial
  \zeta_j},   \frac{\partial}{\partial \xi_n} ) \\ && \\
  & = & \frac{\xi_n}{\sqrt{|\zeta|^2  + \xi_n^2}}.  \end{array} $$
since $\frac{d}{dt} \Phi^t(q,
  \eta, \xi_n) = H_g =  \frac{\xi_n}{\sqrt{|\zeta|^2  + \xi_n^2}} \frac{\partial}{\partial x_n} + \cdots $ is the Hamilton vector
  field of $g  = \sqrt{g^2}, g^2 = \xi_n^2 + (g')^2$ where $\cdots$  represent vector fields in
  the span of $\frac{\partial}{\partial q_j},  \frac{\partial}{\partial
  \zeta_j},   \frac{\partial}{\partial \xi_n}$. Finally, we use that   $d \Phi^t$ is symplectic
  linear and that $q, x_n, \zeta, \xi_n$ are symplectic
  coordinates. Note that we are evaluated the symplectic volume
  form at the domain point, not the image point.

  \end{proof}

Next we take consider the points in the image of $\Phi$ on $\R
\times T^*_{\partial \Omega} \R^n$ where $x_n' = 0$ and take the
quotient by $|d x_n'|^{\half}$,  resulting in a half density on
$\Gamma^j_{\partial}$. The next Lemma explains the origin of the
second $\gamma$ factor.

  \begin{lem}\label{BIGFACTOR} In the subset  $\Gamma^j_{\partial} \subset \Phi(\R
\times T^*_{\partial \Omega} \R^n)$ where $x_n' = 0$ and where $t
= T^j$,  we have (in the parameterizing coordinates
(\ref{PARAMb})),
$$\frac{|dt \wedge d q \wedge d \zeta \wedge d \xi_n |^{\half}}{|
d x_n'|^{\half}} = \left| ((\beta^j)^* \gamma^{-1})  d q \wedge d
\eta \wedge d \tau \right|^{\half}. $$
  \end{lem}

  \begin{proof}

 By the previous Lemma, it suffices to rewrite $$  |d x_n'|^{-\half} \left|  \Phi^*
  \omega_{T^* \R^n} \right|^{\half}$$
  in the coordinates $(\tau, q, \eta)$  of
$\iota_{\partial, j}$ in (\ref{PARAMb}).  We observe
that $x_n' = \Phi^*
  x_n$. Hence
 $$\begin{array}{lll}  |d x_n'|^{-\half} \left| \Phi^{*}
  \omega_{T^* \R^n} \right|^{\half} & = &  \left| \Phi^{*}
  \frac{\omega_{T^* \R^n}}{|d x_n|} \right|^{\half} \\ && \\
  &  = &   \left| ((\beta^j)^* \gamma^{-1}) d q \wedge d \zeta \wedge d \tau\right|^{\half}. \end{array} $$
  In the last line, we use (\ref{PHIBETA}),  that
  $$\frac{\omega_{T^* \R^n}}{|dx_n|} = |dq \wedge d \zeta \wedge d
  \xi_n|, $$
    and that $\beta$ is symplectic. Indeed, by (\ref{PHIBETA}),
    $$\begin{array}{l} \Phi^{*} (dq \wedge d \zeta \wedge d \xi_n) = \tau (\beta^j)^* (dq
    \wedge d \frac{\zeta}{\tau}) \wedge  \Phi^{*} d \xi_n \\ \\ = \tau (\beta^j)^* (dq
    \wedge d \frac{\zeta}{\tau}) \wedge  \Phi^{*} d \sqrt{\tau^2 -
    |\zeta|^2} \\  \\
    = dq \wedge d \zeta \wedge \Phi^{*} \frac{ \tau d \tau}{\sqrt{\tau^2 -
    |\zeta|^2}} = ((\beta^j)^* \gamma^{-1}) dq \wedge d \zeta \wedge d \tau.  \end{array}$$
    Note that $\tau (\beta^j)^* (dq
    \wedge d \frac{\zeta}{\tau}) = dq \wedge d \zeta |_{\beta^j(q, \zeta)}$.

  \end{proof}

Combining Lemma \ref{BIGFACTOR} with Lemma \ref{QUOTCALC}
completes the proof of (\ref{QUO}) and Proposition \ref{PROP}.

\end{proof}

%

\subsection{\label{SYMPTRACE} Trace along the  boundary:  composition with $\pi_* \Delta^*$}

We now take the trace along the boundary of this operator.
Analogously to  \cite{DG,GM,MM}, we define $\Delta: \R \times
\partial \Omega \to \R \times \partial \Omega \times \partial
\Omega$ to be the diagonal embedding and $\pi_*$ to be integration over $\partial \Omega$.

\begin{lem}\label{symboloftrace} If the fixed point sets of period $T$ of $\beta^k$ are clean for all
$k$ and form a submanifold $F_T$ of $B^*\partial \Omega$ of dimension $d$ (with connected components $\Gamma$), then
$$\pi_* \Delta^* \dot{\rho} \, \chi_{T}(D_t, q', D_{q'}) \chi_{T}(D_t, q, D_{q})S^b_B(t, q', q) \in I^{\frac{d}{2} + \half + 1 -
\frac{1}{4}} (\R, \; T^*_T \mathbb R),
$$ where $$T^*_T \mathbb R= \cup_{\pm}\Lambda_{T, \pm}=\cup_{\pm}\{(T, \pm \tau):\, \tau \in \mathbb R_{+}\}, $$
and its principal symbol on $\Lambda_{T,\pm}$ is given by
$$
c^\pm \tau^{\frac{d + 2}{2}} \;
\sqrt{d\tau}, $$ where $$c^\pm= \sum_{\Gamma \subset F_T} C^{\pm}_{\Gamma} \int_{\Gamma} \dot{\rho}\; \gamma_1 \;  d \mu_{\Gamma}$$ and $c^-=\bar{c^+}$ the complex conjugate of $ c^+$.
\end{lem}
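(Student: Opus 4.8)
The plan is to compose the Fourier integral operator of Proposition \ref{PROP}, whose kernel lives on $\R\times\partial\Omega\times\partial\Omega$ and whose canonical relation is $\Gamma_{\partial,+}=\bigcup_j\Gamma^j_{\partial,+}$, with the operations $\dot\rho$ (multiplication, which does not change the order or the canonical relation), $\Delta^*$ (pullback by the diagonal $q\mapsto(q,q)$), and $\pi_*$ (pushforward / integration over the fiber $\partial\Omega$ with respect to $dq$). First I would check the clean-intersection hypotheses that make $\Delta^*$ and $\pi_*$ yield a Fourier integral distribution: the diagonal $\R\times\partial\Omega\hookrightarrow\R\times\partial\Omega\times\partial\Omega$ must meet $\Gamma^j_{\partial,+}$ cleanly, and this is exactly where the cleanliness assumption on the fixed-point sets $F_T$ of $\beta^k$ enters — the fiber-critical manifold of the composition is, after projecting to $B^*\partial\Omega$, precisely $F_T$, of dimension $d$. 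Granting cleanness, the standard calculus (as in \cite{DG}, Lemma 4.2, and as used in \cite{GM,MM,PT,PT2}) gives a conormal distribution on $\R$ with singularity at $t=T$, i.e. a Lagrangian distribution associated to $T^*_T\R=\bigcup_\pm\Lambda_{T,\pm}$, with an order shift of $+\tfrac{d}{2}$ coming from the $d$-dimensional clean fiber integral (more precisely the excess $e=d$ contributes $e/2$ to the order). Adding this to the order $\half+1-\tfrac14$ from Lemma \ref{partial} gives the claimed order $\tfrac{d}{2}+\half+1-\tfrac14$.

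Next I would compute the principal symbol. By the clean-composition symbol formula, the symbol of $\pi_*\Delta^*(\dot\rho\cdot K)$ on $\Lambda_{T,+}$ is obtained by restricting the half-density symbol $\sigma_{j,+}$ of Proposition \ref{PROP} to the diagonal (set $q'=q$, $\zeta'=\zeta$), multiplying by $\dot\rho(q)\,dq$ from the $\dot\rho$ and the surface-measure pushforward, and integrating the resulting density over the fiber, i.e. over each connected component $\Gamma$ of $F_T$, against the canonical density $d\mu_\Gamma$ of \cite{DG} on the clean fixed-point set. On $F_T$ one has $\beta^j(q,\zeta/\tau)=(q,\zeta/\tau)$, so the two factors $\chi_T(q,\zeta/\tau)\chi_T(\beta^j(q,\zeta/\tau))$ both equal $1$ (the cutoff is $1$ near $F_T$), and the two $\gamma$-factors collapse: in the Dirichlet case $\gamma^{\half}(q,\zeta,\tau)\gamma^{\half}(\tau\beta^j(q,\zeta/\tau),\tau)=\gamma(q,\zeta,\tau)$, and rewriting $\gamma(q,\zeta,\tau)=\gamma_1(q,\zeta/\tau)$ in the homogeneous (degree-zero) variable $\zeta/\tau$ produces exactly the factor $\gamma_1$ in the statement. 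The remaining $\tau$-dependence is the monomial $\tau$ from $\sigma_{j,+}$ together with the powers of $\tau$ produced by the fiber integration over the $d$-dimensional $\Gamma$ and by converting the half-density on $\Lambda_{T,+}$ into the normalization $\sqrt{d\tau}$; collecting these gives $\tau^{(d+2)/2}$. Summing over the components $\Gamma\subset F_T$ (and over the finitely many $j$ with a given length $T$, which is absorbed into $C^\pm_\Gamma$ and $d\mu_\Gamma$) yields the coefficient $c^+=\sum_\Gamma C^+_\Gamma\int_\Gamma\dot\rho\,\gamma_1\,d\mu_\Gamma$. The Neumann case is identical, with the $\gamma^{-\half}\gamma^{-\half}$ and the extra factor $\langle\zeta,\beta^j(q,\zeta/\tau)\rangle-\tau$ of Proposition \ref{PROP} evaluated on $F_T$; on the diagonal this again reduces to a power of $\gamma_1$ (now in the denominator, times $|\zeta|^2/\tau-\tau=-\tau\gamma_1^2$, which recombines) so that the final symbol has the same shape, only with different Maslov constants absorbed into $C^\pm_\Gamma$. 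Finally, the $-$ branch $\Lambda_{T,-}$ is handled by the time-reversal symmetry $\xi\to-\xi$ noted in the proof of Proposition \ref{PROP}, which shows the symbol there is the complex conjugate, giving $c^-=\overline{c^+}$; this is also forced by the fact that $\delta\,Tr\,E_B(t)$ is real and even in $t$.

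The main obstacle I expect is the careful bookkeeping of the clean-composition symbol calculus across the three operations $\dot\rho$, $\Delta^*$, $\pi_*$: one must identify the fiber-critical set with $F_T$, verify it is clean with excess exactly $d$ using the cleanliness hypothesis, keep track of how the canonical half-density of Proposition \ref{PROP} pushes and pulls (the appearance of $dq$ as surface measure versus the symplectic volume $dq\wedge d\zeta$, and the passage from a half-density on $\Gamma^j_{\partial,+}$ to the density $d\mu_\Gamma$ of \cite{DG} on $\Gamma$ together with $\sqrt{d\tau}$ on $\Lambda_{T,\pm}$), and confirm that the $\tau$-powers assemble to exactly $\tau^{(d+2)/2}$ and that the geometric factor on the fiber is exactly $\dot\rho\,\gamma_1$. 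The Maslov/constant factors $C^\pm_\Gamma$ are not needed explicitly — only their nonvanishing, inherited from Proposition \ref{PROP} — so I would not compute them, merely note that they are the product of the constant in Proposition \ref{PROP} with the standard Maslov contribution from the clean stationary-phase over $\Gamma$, and that $c^-=\overline{c^+}$.
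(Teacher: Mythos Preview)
Your proposal is correct and follows essentially the same approach as the paper: both apply the clean-composition calculus of \cite{DG} to $\pi_*\Delta^*$ acting on the Fourier integral kernel of Lemma \ref{partial}/Proposition \ref{PROP}, identify the fiber-critical set with $F_T$ via the cleanliness hypothesis, read off the order shift $+d/2$, and obtain the symbol coefficient as an integral of $\dot\rho\,\gamma_1$ over each component $\Gamma$ against the canonical density $d\mu_\Gamma$, with $c^-=\overline{c^+}$ from reality of the trace. Your write-up is in fact more explicit than the paper's (which largely defers the symbol mechanics to \cite{DG}); in particular your verification that on $F_T$ the $\chi_T$ cutoffs equal $1$, the two $\gamma$-factors collapse to $\gamma_1$ in the Dirichlet case, and the Neumann factor $\langle\zeta,\beta^j(q,\zeta/\tau)\rangle-\tau$ reduces to $-\tau\gamma_1^2$ so that the net geometric weight is again $\gamma_1$, is exactly the computation the paper leaves implicit.
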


\begin{proof} The calculation of the principal symbol of the trace
of a Fourier integral operator in \cite{DG} is valid for the
boundary restriction of the wave kernel, since it only uses that
it is $\pi_* \Delta^*$ composed with a Fourier integral kernel
with a known symbol and canonical relation.  Hence we follow the
proof closely and refer there for further details.

As in \cite{GM}, the composition of $\pi_* \Delta^*$ with
\begin{equation} \label{THING} \dot{\rho} \chi_{T}(D_t, q', D_{q'}) \chi_{T}(D_t, q, D_{q})S^b_B(t, q', q) \end{equation} is clean if and only if
the fixed point set of $\beta^k$ corresponding to periodic orbits
of period $T$ is clean. When the fixed point set has dimension $d$
in the ball bundle $B^*\partial \Omega$, composition with $\pi_* \Delta^* $
adds $\frac{d}{2}$ to the order (see \cite{DG}, (6.6)). Combining
with Lemma \ref{partial}, we obtain the order $\frac{d}{2} + \half
+ 1 - \frac{1}{4}$.

Hence under the cleanliness assumption, it follows that  $\delta \;Tr \;
\cos t \sqrt{-\Delta_B}$ is a Lagrangian distribution on $\R$ with
singularities at $t \in Lsp(\Omega)$. As discussed in \cite{DG} (loc.
cit.) for the upper/lower half lines $\Lambda_{T,\pm}$ in $T^*_T \R$,
$I^{\frac{d}{2}+ \frac{5}{4}}(\R, \Lambda_{T,\pm})$ consists of multiples
of the distribution
$$\int_0^{\infty} \tau^{\frac{d+2}{2}} e^{\pm i \tau (t - T)} d\tau = (t - T \pm i 0)^{- \frac{d + 4}{2}}.$$
The principal symbol of this Fourier integral distribution is
$\tau^{\frac{d + 2}{2}} \; \sqrt{d\tau}$. Therefore to conclude the
Lemma we only need to compute the coefficients of this symbol in
the trace.

This coefficient is computed in a universal way from the
principal symbol of (\ref{THING}) computed from Proposition \ref{PROP}.
Following the proof of \cite{DG}, the coefficient of $\tau^{\frac{d + 2}{2}} \; \sqrt{d\tau}$ is
$$c^{\pm}= \sum_{\Gamma \subset F_T} C^\pm_\Gamma \int_{\Gamma} \dot{\rho}\; \gamma_1 \;  d \mu_{\Gamma}, $$
where $F_T$ is the fixed point set of $\beta$ (and its powers) in $B^*\partial \Omega$. The sum is over the connected components $\Gamma$
of $F_T$
and $d\mu_\Gamma$ is the canonical density on the fixed point component $\Gamma$
defined in Lemma 4.2 of \cite{DG}. We note that the distribution $c^+ (t - T + i 0)^{- \frac{d + 4}{2}} +c^-(t - T - i 0)^{- \frac{d + 4}{2}} $ is real only if $c^-=\bar{c^+}$. This completes the proof of the Lemma.

\end{proof}

The Lemma also completes the proof of the Theorem. We close the
section with a remark:


\begin{rem} As a check on the order, we note that for the wave
trace in the interior and for non-degenerate closed trajectories,
the singularities are of order $(t - T + i 0)^{-1}$. When the
periodic orbits are degenerate and the unit vectors in the fixed
point sets have dimension $d$, the singularity increases  to order
$(t - T + i 0)^{-1 - \frac{d}{2}}$. If we formally take the
variation of the wave trace, the singularity should increase to
order $(t - T + i 0)^{-1 - \frac{d}{2} - 1}$.

In comparison,  the boundary trace in the Dirichlet case involves
two extra derivatives of the wave kernel and composition with
$(-\Delta)^{- \half}$. Compared to the interior trace, this adds one
net derivative and  order to the trace singularity. We claim that
the restriction to the boundary  does not further change the order
compared to the interior trace. This can be seen by considering
the method of stationary phase for oscillatory integrals with
Bott-Morse phase functions, whose non-degenerate critical
manifolds are transverse to the boundary. If we restrict the
integral to the boundary, we do not change the number of phase
variables in the integral, but we simultaneously decrease the
number of variables by one and the dimension of the fixed point
set by one. The number of non-degenerate directions stays the
same. It  follows that  the singularity order of the variational
trace  goes up by one overall unit compared to the interior trace,
consistently with the formal variational calculation.

\end{rem}

%
%

 \section{Case of the ellipse and the proof of Theorem \ref{RIGID}}

In this section we let $\Omega_0$ be an ellipse. In this case, the fixed point sets are clean fixed
point sets for $\Phi^t$ in $T^*\Omega_0$, resp. for $\beta$ in
$B^*\partial \Omega_0$ (See  \cite{GM} Proposition 4.3). In fact the fixed point sets $F_T$ of $\beta$ in $B^*\partial \Omega_0$ form a one dimensional manifold. Thus $d=1$ and corollary \ref{VARWTell} follows.



As is well-known, both the billiard flow and
billiard map of the ellipse are completely integrable. In
particular,  except for certain exceptional trajectories,   the
periodic points of period $T$ form a Lagrangian tori in $S^*
\Omega_0$, and the homogeneous extensions of the Lagrangian tori are
cones in  $T^* \Omega_0$. The exceptions are the two bouncing ball
orbits through the major/minor axes and the trajectories which
intersect the foci or glide along the boundary. The fixed point
sets of $\Phi^T$ intersect the co-ball bundle $B^*
\partial \Omega_0$ of the boundary in the fixed point sets of the
billiard map $\beta: B^* \partial \Omega_0 \to B^* \partial \Omega_0$
(for background we refer to \cite{PS,GM,GM2,HZ,TZ2} for instance).
Except for the exceptional orbits, the fixed point sets are real
analytic curves. For the bouncing ball rays, the associated fixed
point sets are non-degenerate fixed points of $\beta$.

Since the final step of the proof uses results of \cite{GM}, we
briefly review the description of the billiard map of the ellipse
$\Omega_0:=\frac{x^2}{a} + \frac{y^2}{b} = 1$ (with $a > b > 0$)  in
that article. In the interior, there exist for each $0 < Z \leq b$
a caustic set given by a confocal ellipse
$$\frac{x^2}{\epsilon + Z} + \frac{y^2}{Z} = 1$$
where $\epsilon = a - b$,  or for $- \epsilon < Z < 0$ by a
confocal hyperbola. Let $(q, \zeta)$ be in $B^*\partial \Omega_0$ and let $(q,\xi)$ in $S^*\Omega_0$ be the unique inward unit normal to boundary that projects to $(q,\zeta)$. The line segment $(q,r\xi)$ will be tangent to a unique confocal ellipse or hyperbola (unless it intersects the foci). We then define the function $Z(q, \zeta)$ on $B^* \partial \Omega_0$ to be the corresponding $Z$.  Then $Z$ is a $\beta$-invariant function
and its level sets $\{Z = c\}$ are the invariant curves of
$\beta$. The invariant Leray form on the level set is denoted
$du_Z$ (see \cite{GM}, (2.17), i.e. the symplectic form of $B^*\partial \Omega_0$
is $dq \wedge d \zeta = dZ \wedge du_Z$. A level set has a rotation number and the
periodic points live in the level sets with rational rotation
number. As it is explained in \cite{GM} (page 143) the Leray form $du_Z$ restricted to a connected component $\Gamma$ of $F_T$ is a constant multiple of the canonical density $d\mu_\Gamma$.


 As mentioned in the introduction, the  well-known obstruction to using trace formula calculations
 such as in Proposition \ref{VARWTintro} is multiplicity in the length
 spectrum, i.e. existence of several connected components of
 $F_T$. A higher dimensional component is not itself a
 problem, but there could exist cancellations among terms coming
 from components with different Morse indices, since the
 coefficients $C_{\Gamma}$ are complex. This problem arose earlier
 in the spectral theory of the ellipse in \cite{GM}. Their key
 Proposition 4.3 shows that there are is a sufficiently large set
 of lengths $T$ for which $F_T$ has one component up to $(q, \zeta) \to (q, -\zeta)$ symmetry.
 Since it is crucial here as well, we state the relevant part:

 \begin{prop}\label{GM}  (see \cite{GM}, Proposition 4.3): Let $T_0 =
 |\partial \Omega_0|$. Then for every interval $(m T_0 - \epsilon,
 m T_0)$ for $m = 1, 2, 3, \dots$ there exist infinitely many
 periods $T \in Lsp(\Omega_0)$ for which $F_T$ is the union
 of two invariant curves which are mapped to each other by $(q,\zeta)
 \to (q, -\zeta)$. \end{prop}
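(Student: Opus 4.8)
The plan is to reconstruct the argument of \cite{GM} (Proposition 4.3) by analyzing the completely integrable billiard map $\beta$ of the ellipse near the glancing directions, where the invariant curves of $\beta$ are of whispering-gallery type. First I would use the structure recalled just above: the $\beta$-invariant function $Z$ on $B^*\partial\Omega_0$ has level sets $\{Z=c\}$ equal to the invariant curves of $\beta$, with $c\in(0,b)$ corresponding to confocal-ellipse caustics. Since $Z(q,\zeta)=Z(q,-\zeta)$ and the line through $(q,\zeta)$ has the same confocal caustic as that through $(q,-\zeta)$ but is traversed with the opposite orientation, for $c$ close to $b$ the level set $\{Z=c\}$ consists of exactly two rotational invariant curves $\Gamma^+_c,\Gamma^-_c$, one accumulating on each of the two glancing circles of $S^*\partial\Omega_0=\partial(B^*\partial\Omega_0)$, and these are interchanged by $\sigma:(q,\zeta)\mapsto(q,-\zeta)$. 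Because $\sigma$ is a (time-reversing) symmetry of $\beta$ it preserves periods, so as soon as $\Gamma^+_c$ is made of periodic points of length $T$, so is $\Gamma^-_c$; hence $F_T\supseteq\Gamma^+_c\sqcup\Gamma^-_c$, a union of two disjoint invariant curves swapped by $\sigma$.

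Second, I would bring in the rotation number $\rho(Z)$ and the length function from \cite{GM}. The quantitative inputs I need are: (i) $\rho$ is continuous and strictly monotone for $Z$ near $b$, with $\rho(Z)\to 0$ as $Z\to b^-$; and (ii) the common length $L(Z)$ of the periodic orbits on $\{Z=c\}$ of rotation number $m/q$ is strictly monotone in $Z$ near $b$ and increases to $mT_0$ as $q\to\infty$ (geometrically, as $\beta$ spirals out toward the glancing set the $m$-fold winding orbits exhaust the $m$-fold perimeter $mT_0$ from below). Fixing $m$ and $\epsilon>0$, for each large $q$ with $\gcd(m,q)=1$ let $Z_{m/q}$ be the unique caustic parameter with $\rho(Z_{m/q})=m/q$; then $Z_{m/q}\to b$, the periods $T_{m/q}:=L(Z_{m/q})$ lie in $(mT_0-\epsilon,mT_0)$ for all $q$ beyond some $q_0(m,\epsilon)$, and they are pairwise distinct by the strict monotonicity in (ii). This produces the required countably infinite family of candidate periods.

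Third — the crux — I would establish length-spectral simplicity near $mT_0$: once $q_0$ is large, the only invariant curve of $\beta$ or of a power $\beta^k$ carrying a closed orbit of length exactly $T_{m/q}$ is $\{Z=Z_{m/q}\}$ itself. The point is that every closed billiard trajectory of length in $(mT_0-\epsilon,mT_0)$ must be whispering-gallery type, i.e. have caustic parameter in a small one-sided neighborhood of $b$: the librational orbits with confocal-hyperbola caustics, the bouncing-ball orbits through the two axes, and the exceptional orbits through the foci or gliding along the boundary all have lengths confined, once $q_0$ is large, to a set bounded away from $mT_0$; equivalently, the billiard length function stays bounded away from $mT_0$ off a neighborhood of the glancing set. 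On the remaining regime, strict monotonicity of $L$ in $Z$ forces any invariant curve of length $T_{m/q}$ to coincide with $\{Z=Z_{m/q}\}$, hence also to have rotation number $m/q$. Combining the three steps: for every $m$ and every $\epsilon>0$ there are infinitely many periods $T=T_{m/q}\in(mT_0-\epsilon,mT_0)$ with $F_T=\Gamma^+_{Z_{m/q}}\sqcup\Gamma^-_{Z_{m/q}}$, two invariant curves mapped to one another by $(q,\zeta)\mapsto(q,-\zeta)$.

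The main obstacle is the third step: ruling out accidental length coincidences with orbits on entirely different invariant curves requires genuine control of the billiard length function over all of phase space, not merely its monotonicity near the glancing set, and it is here that one leans on the detailed confocal-conic and elliptic-integral analysis of \cite{GM}. A secondary, more routine task is to verify the asymptotics $\rho(Z)\to 0$ and $L(Z)\to mT_0^-$ together with the monotonicity statements directly from the explicit caustic description; these are computations with elliptic integrals rather than conceptual difficulties.
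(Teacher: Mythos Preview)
The paper does not supply its own proof of this proposition: it is quoted as Proposition~4.3 of \cite{GM} and used as a black box, with no argument reproduced. There is therefore nothing in the present paper against which to compare your proposal.

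That said, your outline is a faithful reconstruction of the strategy in \cite{GM}: the $Z$-level foliation, the two whispering-gallery components interchanged by $(q,\zeta)\mapsto(q,-\zeta)$, the monotone rotation number, and the length asymptotics $T_{m/q}\to mT_0$ are exactly the ingredients. You are right that the third step is the crux and that it ultimately rests on the explicit elliptic-integral analysis of \cite{GM}. One imprecision is worth flagging: the assertion that non-whispering-gallery orbits have lengths ``bounded away from $mT_0$'' is not literally correct for iterates, since $k$-fold repetitions of shorter orbits can land near $mT_0$. What actually works is that the length-per-winding $\Lambda(Z)=\ell(Z)/\rho(Z)$ is strictly monotone and tends to $T_0$ as $Z\to b$, so any coincidence $k\,T_{m'/q'}=T_{m/q}$ with $km'=m$ forces $\Lambda(Z_{m'/q'})=\Lambda(Z_{m/q})$ and hence the \emph{same} invariant curve; coincidences with $km'\neq m$ force the competing curve to sit at a value of $\Lambda$ bounded away from $T_0$, hence in a fixed compact region of phase space contributing only a discrete set of lengths in $(mT_0-\epsilon,mT_0)$. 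With that refinement your sketch matches \cite{GM}.
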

Since for an isospectral deformation $\delta\; Tr \cos( t\sqrt{-\Delta})=0$, we obtain from Proposition \ref{VARWTintro} the following

\begin{cor} \label{MAIN} Suppose we have an isospectral deformation of an ellipse $\Omega_0$ with velocity $\dot{\rho}$. Then for each $T$ in Proposition \ref{GM} for which $F_T$ is the union
 of two invariant curves $\Gamma_1$ and $\Gamma_2$ which are mapped to each other by $(q, \zeta) \to (q, -\zeta)$ we have
$$\int_{\Gamma_j} \dot{\rho} \; \gamma_1 \; du_Z = 0, \qquad \quad j=1,2.$$
 \end{cor}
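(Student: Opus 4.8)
The plan is to substitute the isospectrality condition into the explicit leading-singularity formula of Corollary \ref{VARWTell} and then exploit the description of $F_T$ given by Proposition \ref{GM} together with the time-reversal symmetry $\sigma(q,\zeta)=(q,-\zeta)$ of the billiard map of $\Omega_0$. \emph{Step 1: vanishing of the symbol sum.} Since $\Omega_\epsilon$ is isospectral, $Tr\cos(t\sqrt{-\Delta_\epsilon})$ is independent of $\epsilon$, so $\delta\,Tr\cos(t\sqrt{-\Delta})=0$. Each $T$ produced by Proposition \ref{GM} is positive and is not an integer multiple of $|\partial\Omega_0|$, so Corollary \ref{VARWTell} applies near $T$ and yields
$$0\ \sim\ -\frac{t}{2}\,\Re\Big\{\Big(\sum_{\Gamma\subset F_T}C_\Gamma\int_\Gamma\dot{\rho}\,\gamma_1\,d\mu_\Gamma\Big)\,(t-T+i0)^{-5/2}\Big\}$$
modulo lower-order singularities. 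For any $w\in\C\setminus\{0\}$ the real part of $w\,(t-T+i0)^{-5/2}$ is a nonzero conormal distribution of order exactly $5/2$ (the boundary values $(t-T\pm i0)^{-5/2}$ are linearly independent since $5/2\notin\Z$), and $-t/2$ is nonvanishing near $T$, so the displayed distribution can be lower order only if its complex coefficient is zero. Hence $\sum_{\Gamma\subset F_T}C_\Gamma\int_\Gamma\dot{\rho}\,\gamma_1\,d\mu_\Gamma=0$ for every such $T$.

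\emph{Step 2: using the symmetry.} Fix such a $T$ and write $F_T=\Gamma_1\sqcup\Gamma_2$ with $\Gamma_2=\sigma(\Gamma_1)$, as in Proposition \ref{GM}. The involution $\sigma$ conjugates $\beta$ to $\beta^{-1}$ and preserves both the symplectic density $|dq\wedge d\zeta|$ on $B^*\partial\Omega_0$ and the $\beta$-invariant function $Z$; hence it preserves the Leray form $du_Z$ and, being a symmetry of all the dynamical data entering the symbol computation, it carries the canonical density $d\mu_{\Gamma_1}$ to $d\mu_{\Gamma_2}$ and the Maslov constant $C_{\Gamma_1}$ to $C_{\Gamma_2}$; in particular $C_{\Gamma_1}=C_{\Gamma_2}=:C\neq 0$. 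Moreover $\dot{\rho}(q)\,\gamma_1(q,\zeta)$ is $\sigma$-invariant, since $\dot{\rho}$ depends only on $q$ and $\gamma_1(q,\zeta)=\sqrt{1-|\zeta|^2}$ is even in $\zeta$. Therefore $\int_{\Gamma_1}\dot{\rho}\,\gamma_1\,d\mu_{\Gamma_1}=\int_{\Gamma_2}\dot{\rho}\,\gamma_1\,d\mu_{\Gamma_2}$, and Step 1 becomes $2C\int_{\Gamma_1}\dot{\rho}\,\gamma_1\,d\mu_{\Gamma_1}=0$, whence $\int_{\Gamma_j}\dot{\rho}\,\gamma_1\,d\mu_{\Gamma_j}=0$ for $j=1,2$. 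Since on each $\Gamma_j$ the density $d\mu_{\Gamma_j}$ is a nonzero constant multiple of $du_Z|_{\Gamma_j}$ (the remark recalled above from \cite{GM}), the asserted identity $\int_{\Gamma_j}\dot{\rho}\,\gamma_1\,du_Z=0$ follows.

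\emph{Main obstacle.} The real difficulty, and the reason the argument is run only over the special periods of Proposition \ref{GM} rather than an arbitrary $T\in Lsp(\Omega_0)$, is to rule out cancellation in the symbol sum: the constants $C_\Gamma$ are complex, so if the two components of $F_T$ had different Morse indices the sum could vanish with neither integral vanishing. The crux is therefore the equality $C_{\Gamma_1}=C_{\Gamma_2}$ (and the matching $\sigma_*d\mu_{\Gamma_1}=d\mu_{\Gamma_2}$), which is exactly what Proposition \ref{GM}, via \cite{GM}, provides — for those periods the two invariant curves are genuinely exchanged by $(q,\zeta)\mapsto(q,-\zeta)$, so the two contributions are in phase and reinforce rather than cancel. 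The subsidiary point, that $(t-T\pm i0)^{-s}$ with $s=5/2\notin\Z$ are linearly independent modulo lower-order singularities, is routine.
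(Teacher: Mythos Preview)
Your proof is correct and follows essentially the same approach as the paper: both use the isospectrality condition to kill the leading singularity in Theorem~\ref{VARWTintro}/Corollary~\ref{VARWTell}, then invoke the time-reversal symmetry $(q,\zeta)\mapsto(q,-\zeta)$ to identify the two integrals and the two Maslov constants, and finally use $C_\Gamma\neq 0$ to conclude. Your write-up is in fact more careful than the paper's on two points --- the linear-independence argument for $(t-T\pm i0)^{-5/2}$ and the explicit passage from $d\mu_{\Gamma_j}$ to $du_Z$ --- whereas the paper simply asserts that ``by directly looking at the stationary phase calculations'' one sees $C_{\Gamma_1}=C_{\Gamma_2}$.
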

\begin{proof}
From Proposition \ref{VARWTintro} we get
$$\Re \big\{ \big( \sum_{j=1}^2 C_{\Gamma_j} \int_{\Gamma_j} \dot{\rho}\; \gamma_1 \;  d \mu_{\Gamma_j}
\big) \; (t - T+ i 0)^{- 2 - \frac{d}{2}} \big\}=0.$$ Since $\dot{\rho}$ and $\gamma_1$ are invariant under the time reversal map $(q,\zeta) \to (q,-\zeta)$, the two integrals are identical. Also by directly looking at the stationary phase calculations it can be shown that the Maslov coefficients $C_{\Gamma_1}$ and $C_{\Gamma_2}$ are also the same. Thus the corollary follows.
\end{proof}

\subsection{Abel transform}

The remainder of the proof of Theorem \ref{RIGID} is identical to
that of Theorem 4.5 of \cite{GM} (see also \cite{PT}). For the sake of completeness, we
sketch the proof.

\begin{prop}\label{RHODOT}  The only $\Z_2 \times \Z_2$ invariant function
$\dot{\rho}$ satisfying the equations of Corollary \ref{MAIN} is
$\dot{\rho} = 0$. \end{prop}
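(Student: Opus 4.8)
The plan is to reduce the vanishing of $\dot\rho$ to an injectivity statement for an Abel-type transform, following the strategy of Guillemin–Melrose. First I would coordinatize the boundary by an angular variable and use the action–angle structure of the elliptical billiard: on each invariant curve $\{Z=c\}$ of $\beta$ the Leray form $du_Z$ and the function $\gamma_1$ are explicit, so the integral $\int_{\Gamma_j}\dot\rho\,\gamma_1\,du_Z$ of Corollary \ref{MAIN} becomes an integral over the caustic parameter. The $\Z_2\times\Z_2$ symmetry is used here: it forces $\dot\rho$ to be an even function with respect to both symmetry axes, so only its cosine Fourier modes (in the appropriate boundary parameter) survive, and the contributions from the two symmetry-related components $\Gamma_1,\Gamma_2$ reinforce rather than cancel. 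Parametrizing $\partial\Omega_0$ by $(x,y)=(\sqrt a\cos\phi,\sqrt b\sin\phi)$, I expand $\dot\rho$ in the basis $\{\cos 2k\phi\}_{k\ge 0}$ consistent with the symmetry.

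Next I would substitute this Fourier expansion into the family of integral constraints indexed by the periods $T$ supplied by Proposition \ref{GM}. Because these $T$ accumulate at every multiple $mT_0$ of the perimeter, the corresponding invariant curves $\{Z=Z(T)\}$ accumulate at the boundary of $B^*\partial\Omega_0$, i.e. $Z(T)\to b$ (the whisper/creeping regime). The key computation is that the constraint for the curve with parameter $Z$ takes the form of an Abel (or more precisely a finite-part Abel/Legendre-type) transform of the sequence of Fourier coefficients: each $\cos 2k\phi$ mode contributes a known elementary function of $Z$ (essentially a Gegenbauer/Legendre-type kernel arising from $\oint \cos 2k\phi\cdot\gamma_1\,du_Z$), and the vanishing of the total for a sequence of $Z$-values accumulating at $Z=b$ forces, mode by mode, each Fourier coefficient to vanish. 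I would invoke real-analyticity in $Z$ of the transformed quantity — the integrand depends real-analytically on the caustic parameter away from the exceptional orbits — so that vanishing on a set with an accumulation point forces vanishing identically in $Z$, and then injectivity of the Abel-type transform gives $\dot\rho\equiv 0$.

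The main obstacle is the injectivity of this Abel/Legendre transform: one must show that the map sending $(\widehat{\dot\rho}_k)_{k\ge 0}$ to the function $Z\mapsto \int_{\Gamma(Z)}\dot\rho\,\gamma_1\,du_Z$ has trivial kernel. This is exactly the content of the argument in Theorem 4.5 of \cite{GM} (and \cite{PT}), where one recognizes the transform as classical and appeals to the known inversion formula, or equivalently expands in the caustic parameter near $Z=b$ and matches asymptotic coefficients to peel off the Fourier modes one at a time. Since the present setting differs from \cite{GM} only in that $\dot\rho$ arises from a $C^\infty$ rather than real-analytic deformation, and the constraint \eqref{} of Corollary \ref{MAIN} is formally identical to theirs, the remainder of the argument is unchanged; I would therefore sketch the reduction to the Abel transform and cite \cite{GM,PT} for its injectivity, concluding $\dot\rho=0$, hence $X_\nu=0$, which is Theorem \ref{RIGID}.
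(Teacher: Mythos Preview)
Your proposal is correct and follows the same strategy as the paper: both reduce to the Abel-transform argument of \cite{GM}, Theorem~4.5, exploiting that the periods supplied by Proposition~\ref{GM} accumulate at the whispering-gallery limit $Z\to b$. The paper's execution is more direct than yours, however. It skips the Fourier expansion of $\dot\rho$ entirely and instead changes variables so that the constraint of Corollary~\ref{MAIN} reads
\[
A(Z)=\int_a^b \frac{\dot\rho(t)\,\gamma_1\,J(t)\,dt}{\sqrt{t-(b-Z)}};
\]
since this is smooth in $Z$ near $Z=b$ and vanishes on a sequence accumulating there, it is flat at $b$, and its Taylor coefficients give the moment conditions $\int_a^b \dot\rho\,\gamma_1 J\,t^{-k-1/2}\,dt=0$ for all $k\ge 0$, whence $\dot\rho\equiv 0$ by density of $\{t^{-k}\}$ in $C[a,b]$. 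Your Fourier decomposition is an unnecessary detour, and your appeal to a classical Abel \emph{inversion formula} is slightly imprecise here (the limits of integration are fixed at $a,b$, not variable in $Z$, so it is really a moment problem rather than an Abel integral equation); but the alternative you sketch---expand near $Z=b$ and read off coefficients---is exactly what the paper does, so the substance is the same.
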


\begin{proof} First, we may assume $\dot{\rho} = 0$ at the
endpoints of the major/minor axes, since the deformation preserves
the $\Z_2 \times \Z_2$ symmetry and we may assume that the
deformed bouncing ball orbits will not move and are aligned with the original ones.
Thus $\dot{\rho}(\pm \sqrt{a}) = \dot{\rho}(\pm \sqrt{b}) = 0$.

The Leray measure may be explicitly evaluated (see $2.18$ in \cite{GM}). By a change of
variables with Jacobian $J$, and using the symmetric properties of $\dot{\rho}$, the integrals become
\begin{equation} \label{F} A(Z) = \int_a^b \frac{\dot{\rho}(t)\; \gamma_1 \; J(t) dt}{\sqrt{t -
(b - Z)}}. \end{equation} for an infinite sequence of $Z$
accumulating at $b$. Since $0 < a < b $, the  function $A(Z)$ is
smooth in $Z$ for $Z$ near $b$. It vanishes infinitely often in
each interval $(b - \epsilon, b)$, hence is flat at $b$. The $k$th
Taylor coefficient at $b$ is \begin{equation} \label{FT}
A^{(k)}(b) = \int_a^b \dot{\rho}(t)\; \gamma_1 \; J(t) t^{- k -
\half} dt = 0.
\end{equation}
Since the functions $t^{-k}$ span a dense subset of $C[a, b]$, it
follows that $\dot{\rho} \equiv 0.$

\end{proof}

\subsection{\label{FLAT} Infinitesimal rigidity and flatness}

We now show that infinitesimal rigidity
implies flatness and prove Corollary
\ref{RIGIDCOR}. As mentioned, the Hadamard variational formula is valid for any $C^1$ parametrization  $\Omega_{\alpha(\epsilon)}$
of the domains $\Omega_\epsilon$. For each one we have
$\delta \rho_{\alpha(\epsilon)} (x) \equiv 0$.

Assume $\rho_\epsilon(x)$ is not flat at $\epsilon=0$ and let $\epsilon^k$ be the first non-vanishing term in the Taylor expansion of $\rho_\epsilon(x)$ at $\epsilon=0$. Then
\begin{equation} \label{TAYLORk} \rho_\epsilon(x) = \epsilon^k \frac{\rho^{(k)}(x)}{k!} + \epsilon^{k + 1} \frac{\rho^{(k + 1)}(x)}{(k+1)!} +
\cdots. \end{equation} We then reparameterize the family by $\epsilon \to
\alpha(\epsilon): = \epsilon^{\frac{1}{k}}$ so that
$$\rho_{\alpha(\epsilon)}(x) = \frac{\rho^{(k)}(x)}{k!} \epsilon + O(\epsilon^{1 + 1/k}). $$
By Hadamard's variational formulae we get
$\delta \rho_{\alpha(\epsilon)} (x)= \rho^{(k)}(x) \equiv 0$, a contradiction.

\end{document}